\newcommand{\ind}[1]{\mathbbm{1}\left\{#1\right\}}
\newcommand{\indtw}{\mathbbm{1}}
\newcommand{\iid}{i.i.d.}
\newcommand{\rdd}{\mathbb{R}^{d}}
\newcommand{\re}{\mathbb{R}}
\newcommand{\limn}{\lim_{n\rightarrow\infty}}
\newcommand{\sam}[2]{\mathbb{#1}_{#2}}
\newcommand{\samplespace}{(\rdd)^n}
\newcommand{\norm}[1]{\left\lVert#1\right\rVert}
\newcommand{\cond}{\stackrel{\text{d}}{\rightarrow}}
\newcommand{\D}{{\rm D}}
\newcommand{\R}{{\rm R}}
\DeclareMathOperator{\HD}{HD}
\DeclareMathOperator{\IRW}{IRW}
\DeclareMathOperator{\SMD}{SD}
\DeclareMathOperator{\PD}{PD}
\newcommand{\conp}{\stackrel{\text{p}}{\rightarrow}}
\DeclareMathOperator*{\argmax}{ {\rm argmax}}
\DeclareMathOperator*{\argmin}{ {\rm argmin}}
\DeclarePairedDelimiter\floor{\lfloor}{\rfloor}
\DeclareMathOperator{\med}{Med}
\DeclareMathOperator{\iqr}{IQR}
\DeclareMathOperator{\mad}{MAD}
\newcommand{\GS}{\mathrm{GS}}
\newcommand{\LS}{\mathrm{LS}}
\newtheorem{theorem}{Theorem}
\newtheorem{mech}{Mechanism}
\newtheorem{definition}{Definition}
\newtheorem{lem}{Lemma}
\newtheorem{remark}{Remark}
\def\tmp#1 #2\relax{#1}
\xdef\intsize{\the\dimen0}
\def\dividedimen (#1/#2){\expandafter\ignorept\the
   \dimexpr\numexpr\number\dimexpr#1\relax
   *65536/\number\dimexpr#2\relax\relax sp\relax
}
{\lccode`\?=`\p \lccode`\!=`\t  \lowercase{\gdef\ignorept#1?!{#1}}}
\def\flexibleint{\def\fxintL{}\def\fxintU{}\futurelet\next\fxintA}
\def\fxintA{\ifx\next_\expandafter\fxintB\else\expandafter\fxintC\fi}
\def\fxintB_#1{\def\fxintL{#1}\fxintC}
\def\fxintC{\futurelet\next\fxintD}
\def\fxintD{\ifx\next^\expandafter\fxintE\else\expandafter\fxintF\fi}
\def\fxintE^#1{\def\fxintU{#1}\fxintF}
\def\fxintF#1{\begingroup
   \setbox0=\hbox{$\displaystyle{#1}$}%
   \dimen0=\ht0 \advance\dimen0 by\dp0
   \setbox1=\hbox{$\vcenter{\copy0}$}%
   \font\tmp=\intfont\space at\dividedimen(\dimen0/\intsize)pt
   \lower\dimexpr\dp0-\dp1\hbox{%
      $\textfont3=\tmp \displaystyle\int_{\fxintL}^{\fxintU}$}
   \box0
   \endgroup
}
\providecommand{\keywords}[1]
{
  \small	
    \begin{center}\textbf{\textit{Keywords---}} #1\end{center}
}
\title{Differentially private depth functions and their associated medians}
\author{Kelly Ramsay, Shoja'eddin Chenouri}
\date{January 2021}
\begin{document}
\maketitle
\begin{abstract}
In this paper, we investigate the differentially private estimation of data depth functions and their associated medians. 
We introduce several methods for privatizing depth values at a fixed point, and show that for some depth functions, when the depth is computed at an out of sample point, privacy can be gained for free when $n\rightarrow \infty$. 
We also present a method for privately estimating the vector of sample point depth values. 
Additionally, we introduce estimation methods for depth-based medians for both depth functions with low global sensitivity and depth functions with only highly probable, low local sensitivity. 
We provide a general result (Lemma 1) which can be used to prove consistency of an estimator produced by the exponential mechanism, provided the limiting cost function is sufficiently smooth at a unique minimizer. 
We also introduce a general algorithm to privately estimate a minimizer of a cost function which has, with high probability, low local sensitivity. 
This algorithm combines the propose-test-release algorithm with the exponential mechanism. 
An application of this algorithm to generate consistent estimates of the projection depth-based median is presented. 
Thus, for these private depth-based medians, we show that it is possible for privacy to be obtained for free when $n\rightarrow \infty$.
\end{abstract}
\keywords{Differential Privacy, Depth function, Multivariate Median, Propose-test-release}
\section{Introduction}
There is a large body of literature that shows simply removing the identifying information about subjects from a database is not enough to ensure data privacy \citep[see][and the references therein]{Dwork2017}.
Even if only certain summary statistics are released, an adversary can still learn a surprising amount about individuals in a database \citep{Dwork2017}. 
This phenomena is largely due to auxiliary information that is known by the adversary. Given the large amount of information about individuals that is publicly available, it is not infeasible to assume that an adversary already knows some information about the individual they wish to learn about. 
On the contrary, if a statistic is differentially private an adversary cannot learn about the attributes of specific individuals in the original database, regardless of the amount of initial information the adversary possesses. 
This property, coupled with the lack of assumptions on the data itself needed to ensure privacy, accounts for the volume of recent literature on differentially private statistics. 

One part of this literature represents a growing interest in the statistical community in differentially private inference, e.g., \citep[][]{Wasserman2010, Awan2019,Cai2019, Brunel2020}. 
One burgeoning area is the connection between robust statistics and differentially private statistics, first discussed by \cite{Dwork2009}. 
Private M-estimators were studied by several authors \citep{Lei2011, Avella-Medina2019}. 
A connection between private estimators and gross error sensitivity was formalized by \cite{Chaudhuri2012}, who present upper and lower bounds on the convergence of differentially private estimators in relation to their gross error sensitivity. 
The connection between private estimators and gross error sensitivity has been further exploited in order to construct differentially private statistics \citep{Avella-Medina2019}. 
\cite{Brunel2020} greatly expanded the propose-test-release paradigm of \cite{Dwork2009} using the concept of the finite sample breakdown point. 
The same authors use this idea to construct private median estimators with sub-Gaussian errors \citep{Avella-Medina2019a}. 
Our present work is inspired by these recent papers, where we explore the privatization of depth functions, a robust and nonparametric data analysis tool; given the recent success of robust procedures in the private setting, it is worthwhile to develop and study privatized depth functions and associated medians. 

Depth functions give all points in the working space a rating based on how central they are with respect to a dataset; if a point is nested in the dataset, then it will have high depth. 
Depth functions have been widely studied over the last two decades, first used as a means of providing multivariate analogues of certain robust, nonparameteric univariate methods \citep{Zuo2000, Serfling2002} and later as a building block in general inference procedures, e.g., \citep{Cardenas2014, Jeong2016, Chenouri}. 
Some of the inference procedures which can be conducted via depth functions include hypothesis testing \citep{Liu1993,  Serfling2002, Li2004, Chenouri2011}, data visualization \citep{Liu1999}, clustering \citep{Jeong2016}, classification \citep{Lange2014}, outlier detection \citep{Cardenas2014}, change-point problems \citep{Liu1995, Chenouri, RAMSAY2020b} and discriminant analysis \citep{Chakraborti2019}. 
Depth functions provide a large framework for conducting robust, nonparametric inference in multivariate spaces. 
Obtaining private versions of data depth functions will provide immediate private analogues of several of these procedures. 
This means that, through private data depth functions, we may be able to conduct multiple inference procedures without degrading the privacy budget. 
Furthermore, the robustness properties of depth functions, such as a high breakdown point, are well studied and favourable \citep{Romanazzi2001, Chen2002, Zuo2004, Dang2009}, making them a promising direction of study for use in the private setting. 
Here, we take some of the first steps in privatizing depth based inference. 
The contributions are as follows:
\begin{itemize}
    \item We present several approaches for the privatization of sample depth functions, including a discussion of advantages and disadvantages of each approach. 
    \item We present algorithms for the private release of sample depth values of several popular depth functions. 
    These include halfspace depth \citep{Tukey1974}, simplicial depth \citep{liu1990}, IRW depth \citep{RAMSAY201951} and projection depth \citep{zuo2003}. 
    Our algorithms and analysis can also be applied to depth functions with similar characteristics. 
    We present asymptotic results concerning these private, depth value estimates, showing that pointwise, private depth values can be consistently estimated. 
    \item We present algorithms for generating consistent, private depth-based medians, using the exponential mechanism and the propose-test-release framework of \cite{Dwork2009} and \cite{Brunel2020}.  
    \item We extend the propose-test-release algorithm of \cite{Brunel2020} to be used with the exponential mechanism. 
    We present a general algorithm for releasing a private maximizer of an objective function (or minimizer of a cost function) which may have infinite global sensitivity.
    \item We present a general result (Lemma \ref{lem::exp_m}) that can be used to prove weak consistency of private estimators generated from the exponential mechanism, even if the cost function is not necessarily differentiable. 
\end{itemize}
It should be noted that some work has been done surrounding the private computation of halfspace depth regions and the halfspace median \citep{Beimel2019,Gao2020}, mainly from a computational geometry point of view. 
Though \cite{Beimel2019} mentions that the halfspace depth function can be used with the exponential mechanism, they do not study the estimator's properties from a statistical point of view; it is used as a method of finding a point in the convex hull of a set of points. 
Aside from halfspace depth, to the best of our knowledge, no one has studied differentially private versions of other depth functions or depth-based inference. 
\section{Differential Privacy}
Before getting into the fundamentals of differential privacy, it is useful to first introduce some notation. 
Given $x\in \rdd$ we define the $p$-norm as $\norm{x}_p=\left(\sum_{j=1}^d x_j^p\right)^{1/p}$ and given some function $\phi\colon \rdd\rightarrow\re$, we set $\norm{\phi}_{\infty}=\sup_{y\in\rdd} \phi(y)$. 
We represent the data with $\sam{X}{n}=\{X_1,\ldots,X_n\}$ and assume that the data is a random sample of size $n$ such that each observation is in $\rdd$. 
We use $F_n$ to represent the empirical measure determined by $\sam{X}{n}$. 
For a univariate distribution $F$, we use $F^{-1}$ to denote the left continuous quantile function. 
Throughout the paper we define the median of a continuous, univariate distribution by $\med(F)=F^{-1}(1/2)$. 
Both $\med(\sam{X}{n})$ and $\med(F_n)$ are taken to be the usual sample median. 
In other words, $\med(F_n)$ is the usual sample median and not $F_n^{-1}(1/2)$. 
We use $Q_{\sam{X}{n}}$ to represent a measure that depends on the data set $\sam{X}{n}$.  
Differentially private statistics will be denoted with the $\sim$ symbol, e.g., $\widetilde{T}$. 
Given a database $\sam{X}{n}$, we let $\mathcal{D}(\sam{X}{n},k)$ be the set of all databases of size $n$ which differ from $\sam{X}{n}$ by $k$ observations. 

A first essential concept when studying differential privacy is that of a mechanism. 
It has been shown that all differentially private statistics $\widetilde{T}(\sam{X}{n})$ must admit (non-degenerate) measures given the data $Q_{\sam{X}{n}}$. 
This means that given the data, a differentially private statistic (or database) is a random quantity \citep{Dwork2014}. 
We call the procedure that determines $Q_{\sam{X}{n}}$ and then outputs a random draw $\widetilde{T}(\sam{X}{n})$ from $Q_{\sam{X}{n}}$ a mechanism. 
We may also refer to the mechanism by $\widetilde{T}$ with an abuse of notation. 
A second essential concept  for studying differential privacy is that of adjacent databases. 
We say that $\sam{X}{n}$ and $\sam{Y}{n}$ (another random sample of size $n$) are adjacent if they differ by one observation; $\sam{Y}{n}\in \mathcal{D}(\sam{X}{n},1)$. 
In other words, $\sam{X}{n}$ and $\sam{Y}{n}$ are adjacent if their symmetric difference contains one element. 
Equipped with these concepts, we can now define differential privacy: 
\hfill\newpage
\begin{definition}
A mechanism $\widetilde{T}$ is $\epsilon$-differentially private for $\epsilon>0$ if  \begin{equation}
    \frac{Q_{\sam{X}{n}}(B)}{Q_{\sam{Y}{n}}( B)}\leq e^\epsilon
    \label{eqn::dp}
\end{equation} 
holds for all measurable sets $B$ and all pairs of adjacent datasets $\sam{X}{n}$ and $\sam{Y}{n}$.
\label{def::dp}
\end{definition}
\noindent The parameter $\epsilon$ should be small, implying that $$\frac{Q_{\sam{X}{n}}(B)}{Q_{\sam{Y}{n}}( B)}\approx 1,$$
which gives the interpretation that the two measures $Q_{\sam{X}{n}}$ and $Q_{\sam{Y}{n}}$ are almost equivalent. 
To understand this definition, it helps to think of the problem from the adversary's point of view. 
Suppose that we are the adversary and that we have access to all the entries in the database except for one, call it $\theta$, which we are trying to learn about. 
If $\widetilde{T}$ is released, how can we use it to conduct inference about $\theta$? 
Suppose we want to know whether or not $\theta$ belongs to some family of rows $\Theta_0$, i.e., we want to test $$H_0\colon\theta\in \Theta_0 \text{ vs. } H_1\colon \theta\notin \Theta_0.$$ 
To conduct this test, we would then ask two questions: 
\begin{center}
 \textit{How likely was it to observe $\widetilde{T}$ under $H_0$?} and
 \textit{How likely was it to observe $\widetilde{T}$ under $H_1$?}
\end{center}
Differential privacy stipulates that both of these questions have practically the same answer, making it impossible to infer anything about $\theta$ from $\widetilde{T}$. 
Definition \ref{def::dp} implies that if someone in the dataset was replaced, we are just as likely to have seen $\widetilde{T}$ (or some value very close to $\widetilde{T}$ if $Q_{\sam{X}{n}}$ is continuous). 
Another way to interpret the definition is to observe that differential privacy implies that $\mathrm{KL}(Q_{\sam{X}{n}},Q_{\sam{Y}{n}})<\epsilon$, where $\mathrm{KL}$ is the Kullback–Leibler divergence; implying that the distributions are necessarily close. 

One may observe that the inequality \eqref{eqn::dp} must hold for all pairs of adjacent databases and all possible outcomes of the estimator in order for the procedure to be differentially private. 
This inequality is then a worst case restriction, in the sense that \eqref{eqn::dp} must hold for even the worst possible database and the worst possible outcome of the mechanism.  
Definition \ref{def::dp} can be difficult to satisfy because the umbrella of `all databases and mechanism outputs' can include both some extreme databases and extreme mechanism outputs. 
One may wish to relax this definition over unlikely mechanism outputs;
one way to do this is if $B$ is such that $Q_{\sam{X}{n}}( B)$ is very small, then the bound could be allowed to fail. 
This is called approximate differential privacy or $(\epsilon,\delta)$-differential privacy, in which we have
\begin{equation}
    Q_{\sam{X}{n}}( B)\leq e^\epsilon Q_{\sam{Y}{n}}( B)+\delta
      \label{eqn::adp}
\end{equation}
in place of the condition \eqref{eqn::dp}. 
Typically, $\delta << \epsilon$, and $\delta$ can be interpreted as the probability under which the bound is allowed to fail. 
To see this, observe that for $B$ such that $Q_{\sam{X}{n}}( B)<\delta$, \eqref{eqn::adp} holds regardless of $\epsilon$. 
We mention that for remainder of the paper, $\epsilon$ and $\delta$ are always assumed to be positive and that sometimes we may have that the privacy parameters are a function of the sample size, and we indicate this with a subscript $n$; $\epsilon_n,\ \delta_n.$

Central to many private algorithms is the concept of {\it{sensitivity}}. 
Consider some function $T\colon \samplespace\rightarrow \re^{k}$ where $\samplespace$ denotes the Cartesian product of $n$ copies of $\rdd$. 
Usually $T$ represents a statistic or a data driven objective function. 
Sensitivity measures how sensitive $T$ is to exchanging one sample point for another. 
Two important types of sensitivity are local sensitivity and global sensitivity, which are defined as
$$\LS(T;\sam{X}{n})=\sup_{\sam{Y}{n}\in \mathcal{D}(\sam{X}{n},1)}\norm{T(\sam{X}{n})-T(\sam{Y}{n})} \qquad\text{and}\qquad \GS(T)=\sup_{\substack{\sam{X}{n}\in \samplespace,\\ \sam{Y}{n}\in \mathcal{D}(\sam{X}{n},1)}}\norm{T(\sam{X}{n})-T(\sam{Y}{n})}.$$ 
In some cases, it is necessary to use different norms and so we add the subscript $\GS_p$ to indicate global sensitivity computed with respect to the $p$-norm. 

We can now introduce some important building blocks of differentially private algorithms. 
Let $W_1,\ldots,W_k,\ldots $ and $Z_1,\ldots,Z_k,\ldots $ represent a sequence of independent, standard Laplace random variables and a sequence of independent, standard Gaussian random variables, respectively. 
The Laplace and Gaussian mechanisms are essential differentially private mechanisms; they define how much an estimator must be perturbed in order for it to be differentially private. 
\begin{mech}[\cite{Dwork2006}]
Given a statistic $T\colon \samplespace\rightarrow \re^{k}$, the mechanism that outputs
    $$\widetilde{T}(\sam{X}{n})=T(\sam{X}{n})+(W_1,\dots,W_k)\ \frac{\GS_1(T)}{\epsilon},$$
    is $\epsilon$-differentially private.
\label{mech::LM}
\end{mech}
\begin{mech}[\cite{Dwork2006, Dwork2014}]
Given a statistic $T\colon \samplespace\rightarrow \re^{k}$, the mechanism that outputs
    $$\widetilde{T}(\sam{X}{n})=T(\sam{X}{n})+(Z_1,\dots,Z_k)\, \frac{\sqrt{2\log(1.25/\delta)}\,\GS_2(T)}{\epsilon}$$
    is $(\epsilon,\delta)$-differentially private.
    \label{mech::GM}
\end{mech}
This can be improved in strict privacy scenarios \citep{Balle2018}. 
We can also add noise based on smooth sensitivity \citep{Nissim2007}. 
Using smooth sensitivity allows the user to leverage improbable, worst case local sensitivities. 
Often in practice, statistics are computed by maximizing a data driven objective function $\phi_{\sam{X}{n}}(\cdot)$. 
We can privatize such a procedure via the exponential mechanism. 
The exponential mechanism can be defined 
as follows: 
\begin{mech}[\cite{McSherry2007}]
Given the data, consider a function $\phi_{\sam{X}{n}}:\re^{k} \rightarrow \mathbb{R}$ and define the global sensitivity of such a function as $$\GS(\phi)=\sup_{\substack{\sam{X}{n}\in \samplespace,\\ \sam{Y}{n}\in \mathcal{D}(\sam{X}{n},1)}}\norm{\phi_{\sam{X}{n}}-\phi_{\sam{Y}{n}}}_{\infty}.$$ 
Then a random draw from the density $f(v;\phi_{\sam{X}{n}},\epsilon)$ that satisfies
$$f(x;\phi_{\sam{X}{n}},\epsilon)  \propto \exp \left(\frac{\epsilon\  \phi_{\sam{X}{n}}(x) }{2  \GS(\phi)}\right),$$
is an $\epsilon$-differentially private mechanism. 
It is assumed that
$$\int_{\re^{k}} \exp \left(\frac{\epsilon\ \phi_{\sam{X}{n}}(x) }{2  \GS(\phi)}\right)dx<\infty.$$
\label{mech::expMech}
\end{mech}

The factor of 2 can be removed if the normalizing term is independent of the sample. 
All of the mechanisms discussed so far require that the statistic has finite global sensitivity. 
This is a somewhat strict requirement; under the Gaussian model neither the sample mean nor sample median have finite global sensitivity, even when $d=1$. 
The sample median does, however, have low local sensitivity, viz.  $$\LS(\med(\sam{X}{n}))\leq|F_n^{-1}(1/2-1/n)-F_n^{-1}(1/2+1/n)|,$$
when $d=1$.  
Since $1/n\rightarrow 0$, we expect this value to be small (assuming the sample comes from a distribution which is continuous at its median). 

The propose-test-release mechanism, or PTR, can be used to generate private versions of statistics with infinite global sensitivity but highly probable low local sensitivity. 
The propose-test-release idea was introduced by \cite{Dwork2009} but was greatly expanded in the recent paper by \cite{Brunel2020}. 
The PTR algorithm of \cite{Brunel2020} relies on the truncated breakdown point $A_{\eta}$, which is the minimum number of points that must be changed in order to move an estimator by $\eta$:
\begin{equation}
    A_\eta(T;\sam{X}{n})=\min \left\{k\colon \sup_{\sam{Y}{n}\in \mathcal{D}(\sam{X}{n},k)}\norm{T(\sam{X}{n})-T(\sam{Y}{n})}>\eta\right\},
    \label{eqn::tbdp}
\end{equation}
where one recalls that  $\mathcal{D}(\sam{X}{n},k)$ is the set of all samples that differ from $\sam{X}{n}$ by $k$ observations. 
Unlike the traditional breakdown point, the dependence of $A_\eta(T;\sam{X}{n})$ on $\sam{X}{n}$ is important. 
PTR works by proposing a statistic, testing if it is insensitive and then releasing it if it is, in fact, insensitive. 
A private version of $A_\eta(T;\sam{X}{n})$ is used to check the sensitivity. 
\begin{mech}
Given a statistic $T\colon \samplespace\rightarrow \re^{k}$, the mechanism that outputs
\begin{equation}
 \widetilde{T}(\sam{X}{n})=\left\{  \begin{array}{ll}
   \perp  & \text{if } A_\eta(T;\sam{X}{n})+\frac{1}{\epsilon} W_1\leq 1+\frac{\log(2/\delta)}{\epsilon} \\
    T(\sam{X}{n})+\frac{\eta }{\epsilon}W_2 & o.w.
\end{array} \right. 
\label{eqn::ptr_l}
\end{equation}
is $(2\epsilon,\delta)$ differentially private and the statistic
\begin{equation}
 \widetilde{T}(\sam{X}{n})=\left\{  \begin{array}{ll}
   \perp  & \text{if } A_\eta(T;\sam{X}{n})+\frac{\sqrt{2 \log (1.25 / \delta)}}{\epsilon} Z_1\leq 1+\frac{2 \log (1.25 / \delta)}{\epsilon} \\
    T(\sam{X}{n})+\frac{\eta \sqrt{2 \log (1.25 / \delta)}}{\epsilon}Z_2 & o.w.
\end{array} \right. 
\label{eqn::ptr_g}
\end{equation}
is $\left(2 \epsilon, 2 e^{\epsilon} \delta+\delta^{2}\right)$ differentially private. 
\label{mech::PTR}
\end{mech}
The release of $\perp$ means that the dataset was too sensitive for the statistic to be released. 
The goal is to choose a $T$ such that releasing $\perp$ is incredibly unlikely; $A_\eta(T;\sam{X}{n})$ should be large with high probability. 

All of the mechanisms discussed thus far can be combined to produce more sophisticated algorithms, combining two or more mechanisms is called composition. 
One type of composition is computing a function of a differentially private statistic, where the function is defined independently of the data. 
Such statistics are also differentially private. 
It is also true that sums and products of $k$ differentially private procedures each with privacy budget $\epsilon_i$ are $\sum_{i=1}^k \epsilon_i$- differentially private \citep{Dwork2006}. 
This can be improved with advanced composition \citep{Dwork2014}. 
\begin{theorem}[\cite{Dwork2014}]
For given $0<\epsilon<1$ and $\delta'>0$, the composition of $k$ mechanisms which are each $\left(\frac{\epsilon}{2\sqrt{2k\log(1/\delta')}},\delta\right)$-differentially private is $(\epsilon,k\delta+\delta')$-differentially private.
\label{thm::AC}
\end{theorem}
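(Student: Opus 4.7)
The plan is to control the cumulative privacy loss of the $k$ mechanisms via concentration. For adjacent databases $\sam{X}{n}, \sam{Y}{n}$ and the $i$-th mechanism with output measures $Q_{i,\sam{X}{n}}, Q_{i,\sam{Y}{n}}$, define the privacy loss random variable $L_i(o)=\log\bigl(dQ_{i,\sam{X}{n}}/dQ_{i,\sam{Y}{n}}\bigr)(o)$, where $o$ is distributed according to $Q_{i,\sam{X}{n}}$. Writing $\epsilon_0 = \epsilon/\bigl(2\sqrt{2k\log(1/\delta')}\bigr)$, the target statement reduces, modulo $k\delta+\delta'$ failure probability, to showing $\sum_{i=1}^k L_i \leq \epsilon$ with high probability.

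First I would pass from $(\epsilon_0,\delta)$-DP to pure DP on a high-probability event. A standard reduction (essentially a decomposition of the output measure) shows that for each mechanism there is a set $E_i$ with $Q_{i,\sam{X}{n}}(E_i^c)\leq \delta$ on which the pointwise bound $|L_i|\leq \epsilon_0$ holds almost surely. A union bound over the $k$ mechanisms absorbs $k\delta$ of the final failure budget.

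Second I would bound the mean and fluctuations of each $L_i$ conditionally on $E_i$. The key calculation, using $\epsilon_0$-DP and the inequality $e^{\epsilon_0}\leq 1+\epsilon_0+\epsilon_0^2$ for $\epsilon_0\leq 1$, yields $\mathbb{E}[L_i\mid \text{past}]\leq \epsilon_0(e^{\epsilon_0}-1)\leq 2\epsilon_0^2$, while $|L_i|\leq \epsilon_0$ on $E_i$. Since the composition is adaptive, $\{L_i - \mathbb{E}[L_i\mid L_1,\dots,L_{i-1}]\}_{i=1}^k$ is a bounded martingale difference sequence, so Azuma-Hoeffding gives
\begin{equation*}
\Prb\left(\sum_{i=1}^k L_i \;>\; 2k\epsilon_0^2 \;+\; \epsilon_0\sqrt{2k\log(1/\delta')}\right) \;\leq\; \delta'.
\end{equation*}
Plugging in the choice of $\epsilon_0$ gives $\epsilon_0\sqrt{2k\log(1/\delta')}=\epsilon/2$ and $2k\epsilon_0^2 = \epsilon^2/\bigl(4\log(1/\delta')\bigr)\leq \epsilon/2$ for $\epsilon<1$ and reasonable $\delta'$, so the total is at most $\epsilon$. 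Together with the event $\bigcap_i E_i$, this yields a set of outputs of measure at least $1-k\delta-\delta'$ on which the joint likelihood ratio $\prod_i dQ_{i,\sam{X}{n}}/dQ_{i,\sam{Y}{n}}$ is bounded by $e^\epsilon$, which gives the $(\epsilon,k\delta+\delta')$-DP conclusion after an integration argument against an arbitrary measurable set $B$.

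The hard part will be the first step: cleanly executing the reduction that turns $(\epsilon_0,\delta)$-DP into pointwise bounded privacy loss on a $(1-\delta)$-probability event, in a way that is coherent across all $k$ adaptively chosen mechanisms. One must be careful that the good events $E_i$ can be chosen measurably in the history so that the martingale structure of the losses is preserved; once this measure-decomposition is in place, the expectation bound $\mathbb{E}[L_i]\leq 2\epsilon_0^2$ and Azuma-Hoeffding are essentially mechanical.
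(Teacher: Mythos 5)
The paper does not actually prove this theorem; it is quoted with citation from Dwork and Roth (2014) (their Theorem 3.20 and Corollary 3.21), so the only meaningful comparison is with the standard proof in that reference. Your outline is essentially that proof: privacy-loss random variables $L_i$, the conditional expectation bound $\mathbb{E}[L_i\mid \text{past}]\le \epsilon_0(e^{\epsilon_0}-1)\le 2\epsilon_0^2$, Azuma--Hoeffding applied to the adaptive sum (using the range form, since each $L_i$ lies in an interval of length $2\epsilon_0$, which is what gives exactly $\delta'$ rather than a weaker power of $\delta'$), and the parameter arithmetic $\epsilon_0\sqrt{2k\log(1/\delta')}=\epsilon/2$ together with $2k\epsilon_0^2=\epsilon^2/\bigl(4\log(1/\delta')\bigr)\le \epsilon/2$, which indeed needs the mild side condition $\delta'\le e^{-\epsilon/2}$ that you acknowledge.

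The genuine gap is the step you yourself flag as the hard part, and as stated it is false: $(\epsilon_0,\delta)$-differential privacy does \emph{not} imply the existence of an event $E_i$ with $Q_{i,\sam{X}{n}}(E_i^c)\le\delta$ on which $|L_i|\le\epsilon_0$ almost surely. For instance, take $Q_{\sam{X}{n}}$ uniform on $[0,1]$ and $Q_{\sam{Y}{n}}$ with density $1-\gamma$ on $[0,1/2]$ and $1+\gamma$ on $(1/2,1]$: for small $\gamma$ this pair is roughly $(\gamma/2,\gamma/4)$-indistinguishable, yet the privacy loss exceeds $\gamma/2$ on a set of $Q_{\sam{X}{n}}$-probability $1/2\gg\gamma/4$. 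The correct reduction is the approximating-distribution lemma (Lemma 3.17 in Dwork and Roth): for each adjacent pair there exist surrogate output distributions within total variation $\delta$ of the true ones whose pairwise max-divergence is at most $\epsilon_0$. One then runs your martingale argument on the surrogates, where the pointwise bound and the conditional-expectation bound hold exactly (and the adaptivity/measurability worry about choosing ``good events'' in the history disappears), and the $k\delta$ term is paid once at the end through the total-variation coupling rather than through a union bound over events of the original measures. With that substitution, your architecture goes through and yields the stated $(\epsilon,k\delta+\delta')$ guarantee.
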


\section{Data Depth}
A data depth function is a robust, nonparametric tool used for a variety of inference procedures in multivariate spaces, as well as in more general spaces. 
A data depth function gives meaning to centrality, order and outlyingness in spaces beyond $\re$.  
Data depth functions do this by giving all points in the working space a rating based on how central the point is in the sample. 
Precisely, we can write multivariate depth functions as $\D\colon \rdd\times F_n \rightarrow \re^+$; given the empirical distribution of a sample $F_n$ and a point in the domain, the depth function assigns a real valued depth to that point. 
Figure \ref{fig:HM}\textcolor{violet}{(a)} shows a sample of 20 points labelled by their depth values, we can see that the points in the center of the data cloud have larger values. 
Note that it is not necessary to restrict the domain of the depth function to points in the sample; we can compute depth values for each point in the sample space. 
The heatmap in Figure \ref{fig:HM}\textcolor{violet}{(a)} gives the depth value for each point in the plot. 
Writing depth functions as functions of the empirical distribution $\D(\cdot ;F_n)$ rather than functions of the sample provides a natural definition for the population depth function $\D(\cdot ;F)$. 
Figure \ref{fig:HM}\textcolor{violet}{(b)} shows the population depth values when $F$ is the two dimensional, standard Gaussian distribution. 

\begin{figure}[t]
\begin{minipage}[c]{0.472\textwidth}
    \centering
    \includegraphics[width=\textwidth]{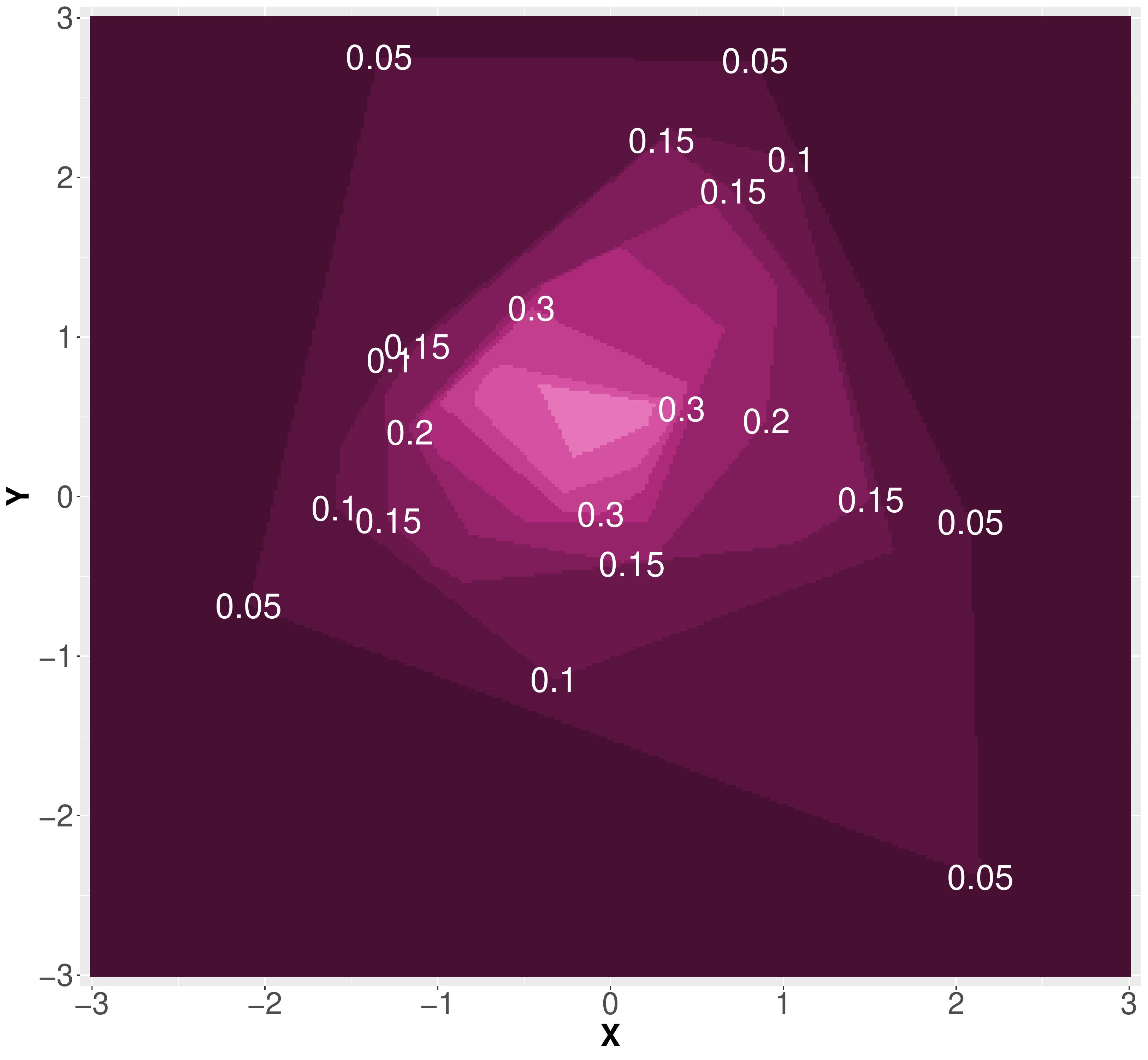}
    \caption*{(a)}
\end{minipage}
\begin{minipage}[c]{0.52\textwidth}
    \centering
    \includegraphics[width=\textwidth]{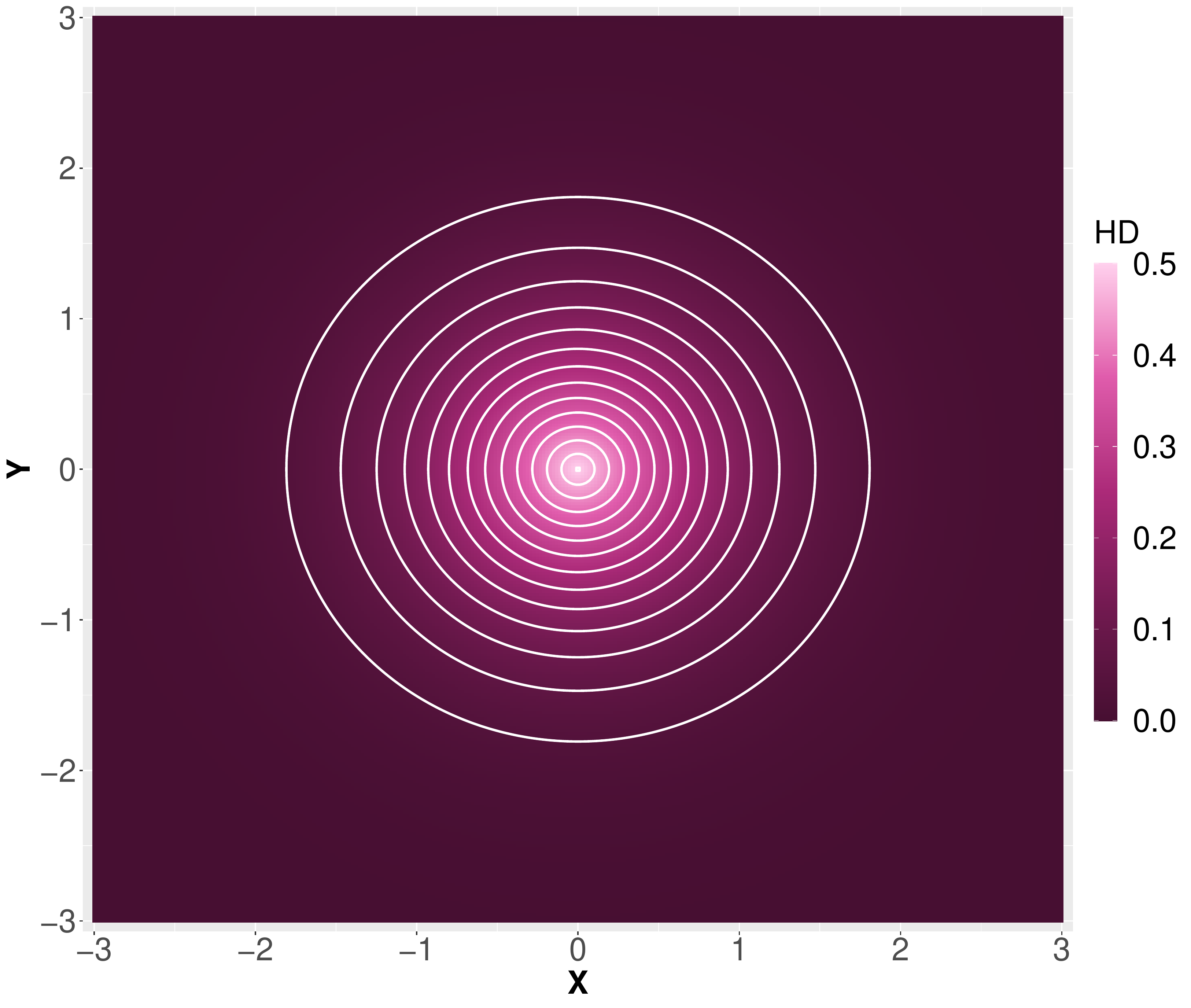}
    \caption*{(b)}
\end{minipage}
\caption[Heatmap of depth values of a sample of 20 points]{(a) Sample halfspace depth values, i.e., $\D(X_i;F_n)$, are displayed in white text. The heatmap of the sample depth function, i.e., $\D(\cdot;F_n)$, is also displayed. This sample is drawn from a standard, two dimensional Gaussian distribution. (b) Theoretical halfspace depth contours for the standard, two dimensional Gaussian distribution.}
\label{fig:HM}
\end{figure}
Depth functions provide an immediate definition of order statistics; observations can be ordered by their depth values. 
However, since the ordering of the sample is center outward, the depth-based order statistics have a different interpretation than univariate order statistics. 
Nevertheless, data depth-based order statistics can be used to define multivariate analogues of many univariate, nonparametric inference procedures.
For example, the definition of the depth-based median is
$$\med(F; \D)=\argmax_{x\in\rdd} \D(x;F).$$
Depth-based medians are generally robust, in the sense that they are not affected by outliers. 
Many depth-based medians have a high breakdown point and favourable properties related to the influence function \citep{Chen2002, Zuo2004}. 
Furthermore, depth-based medians inherent any transformation invariance properties possessed by the depth function. 
We can subsequently define sample depth ranks as 
$$ \R_i= \#\{X_{j }\colon \D(X_{j };F_{n})\leq \D(X_{i};F_{n}),\ j=1,\ldots,n\},$$
which are the building block of various multivariate depth based rank tests \citep{Liu1993, Serfling2002, Chenouri2011, chenouri2012}, as well as providing a method to construct trimmed means \citep{Zuo2002}. 
Depth values can also be used directly in testing procedures \citep{Li2004}. 
Depth functions have also been used for visualization, including the bivariate extension of the boxplot (bagplots) and dd-plots, which allow the analysts to visually compare two samples of any dimension \citep{Liu1999, Li2004}. 
In the same vein of data exploration, we can visualise multivariate distributions through one dimensional curves based on depth values \citep{Liu1999}. 
In the past decade this depth-based inference framework has expanded to include solutions to clustering \citep{Jornsten2004, Baidari2019}, classification \citep{Jornsten2004, Lange2014}, outlier detection \citep{Chen2009OutlierDW, Cardenas2014}, process monitoring \citep{Liu1995}, change-point problems \citep{Chenouri} and discriminant analysis \citep{Chakraborti2019}. 
In summary, depth functions facilitate a framework for robust, nonparametric inference in $\rdd$. 
A major motivating factor for this work is that by privatizing depth functions, we consequentially privatize many of the procedures in this framework. 
This means that private depth values imply access to private procedures for nonparametrically estimating location, performing rank tests, building classifiers and more. 

In their seminal paper \cite{Zuo2000} give a concrete set of mathematical properties which a multivariate depth function should satisfy in order to be considered a {\it{statistical depth function}}. 
These properties include 
\begin{enumerate}
    \item {\it{Affine invariance}}: This implies any depth based analysis is independent of the coordinate system, particularly the scales used to measure the data. 
    \item {\it{Maximality at the center of symmetry}}: If a distribution is symmetric about a point, then surely this point should be regarded as the most central point.
    \item {\it{Decreasing along rays}}: This property ensures that as one moves away from the deepest point, the depth decreases. 
    \item {\it{Vanishing at infinity}}: As a point moves toward infinity along some ray, its depth vanishes. 
\end{enumerate}
A depth function which satisfies these four properties is known as a statistical depth function. 
The last three properties are all related to centrality, where the first is to ensure there is no dependence on the measurement system. 
Not all popular depth functions satisfy all four of these properties, but they typically satisfy most of them. 
Affine invariance, as discussed previously, ensures that the function is not dependent on the coordinate system which, from a practical point of view, means that the measurement scales can be adjusted freely. 
Maximality at center means that if a distribution is symmetric about some point $\theta$, the depth function is maximal at $\theta$. 
Think of the median coinciding with the mean in the univariate case. 
Decreasing along rays means that as one moves along a ray extending from the deepest point, i.e., moves away from the center, the depth value decreases. 
This property can also be replaced with upper semi-continuity. 
Vanishing at infinity means that as the point moves along a ray to infinity, its depth value approaches 0. 
Note that if all four of these properties are not satisfied, it does not necessarily mean that a depth function is invalid or not useful in data analysis; it is merely a limitation to consider. 

Aside from coordinate invariance and centrality, there are other properties that are desirable for a depth function to satisfy. 
We shall list the main ones here
\begin{itemize}
    \item {\it{Robustness:}} A robust depth function implies subsequent inference will be robust, and may make it more amenable to privatization.
    \item {\it{Consistency/Limiting Distribution:}} Consistency for a population depth value and existence of a limiting distribution is useful for developing inference procedures.
    \item {\it{Continuity:}} Continuity can be a building block for consistency and for optimizing the depth function. 
    \item {\it{Computation:}} In order to apply depth-based inference, it is necessary that the depth values are computed quickly. Specifically, being able to compute or approximate the depth values in polynomial time with respect to both $d$ and $n$ is useful.
\end{itemize} 
On top of having these properties, a depth function that is to be used in the private setting should be insensitive. 
In other words, the depth function has low global sensitivity and or highly probable, low local sensitivity. 

We now introduce several depth functions and evaluate their sensitivities. 
The first depth function we will discuss is halfspace depth \citep{Tukey1974}. 
\begin{definition}
[Halfspace depth] Let $S^{d-1}= \{x\in \rdd\colon \ \norm{x}=1\}$ be the set of unit vectors in $\rdd$. Define the halfspace depth $\HD$ of a point $x\in \rdd$ with respect to some distribution $X\sim F$ as
\begin{equation*}
    \HD (x;F)=\inf_{u\in S^{d-1}} \Pr\left(X^\top u\leq x^\top u\right).
\end{equation*}
\label{def::hs}
\end{definition}
\noindent Halfspace depth is the minimum of the projected mass above and below the projection of $x$, over all univariate projections. 
We can interpret the sample depth value of some point $x$ as the minimum normalised, univariate, centre-outward rank of $x$'s projections amongst the samples' projections, over all univariate directions. 
Therefore, if a point is exchanged, all the ranks are shifted by at most one, and the global sensitivity of the unnormalised halfspace depth is 1. 
We get $\GS(\HD)=1/n$, which leads us to conclude that this depth function is relatively insensitive; the global sensitivity is decreasing with respect to the sample size. 
In terms of known properties, halfspace depth is a statistical depth function. 
Its sample depth function is also uniformly consistent for the population depth function \citep{masse2004}. 
Halfspace depth is frequently cited as being computationally complex \cite{Serfling2006}, however, recently an algorithm for computing half-space depth in high dimensions has been proposed \citep{Zuo2019}.

We can replace the minimum in Definition \ref{def::hs} with an average \citep{RAMSAY201951}.
\begin{definition}
[Integrated Rank-Weighted Depth] Define integrated rank-weighted depth as
\begin{equation*}
    \IRW (x;F)=\int_{S^{d-1}} \min\left(\Pr\left(X^\top u\leq x^\top u\right),1-\Pr\left(X^\top u< x^\top u\right)\right) d\nu(u),
\end{equation*}
where $\nu$ is the uniform measure on $S^{d-1}$. 
\label{def::irw}
\end{definition}
It immediately follows from the discussion on the sensitivity of halfspace depth that $\GS(\IRW)=1/n$; this depth function has the interpretation of the average, normalised, univariate centre-outward rank over all projections. 
Therefore, IRW depth is also insensitive. 
Aside from being insensitive, IRW depth vanishes at infinity and is will be maximal at a point of symmetry. 
It is invariant under similarity transformations, which is a weaker form of invariance relative to affine invariance.
It is conjectured that this function also has the decreasing along rays property. 
This depth function is also continuous, and can be approximately computed very quickly \citep{RAMSAY201951}. 
This depth function's sample depth values are also uniformly consistent and asymptotically normal under mild assumptions. 

Another, asymptotically normal depth function is simplicial depth, which was introduced by \cite{Liu1988}. 
\begin{definition} [Simplicial Depth.] 
Suppose that $Y_1, \ldots,Y_{d+1}$ are $\iid$ from $F$. Define simplicial depth as
\begin{equation*}
    \SMD (x;F)= \Pr(x\in \Delta(Y_1, \ldots,Y_{d+1})),
\end{equation*}
where $\Delta(Y_1, \ldots,Y_{d+1})$ is the simplex with vertices $Y_1, \ldots,Y_{d+1}$.
\label{def::smd}
\end{definition}
\noindent We can show that sample simplicial depth has finite global sensitivity. 
Note that 
$$ \SMD (x;F_n)=\frac{1}{\binom{n}{d+1}}\sum_{1<i_1<\ldots <i_{d+1}<n} \ind{X\in \Delta(X_{i_1}, \ldots,X_{i_{d+1}})}.$$
Changing one observation can influence a maximum of $\binom{n-1}{d}$ terms, and each term has a sensitivity of 1. 
It follows that $\GS(\SMD)=(d+1)/n.$ 
Simplicial depth is a statistical depth function if $F$ is angularly symmetric, but fails to satisfy the maximality at center property and decreasing along rays property for some discrete distributions \citep{Zuo2000}. 
Although it is insensitive, this depth function can be difficult to compute in even moderate dimensions ($d>3$). 

The investigation by \cite{Zuo2000} lead to the study of a general and powerful statistical depth function based on outlyingness functions. 
Outlyingness functions $O(\cdot;F)\colon \rdd\rightarrow\re^+$ measure the degree of outlyingness of a point \citep{Donoho1992}. 
A particular version of depth based on outlyingness is projection depth:
\begin{definition}[Projection Depth] 
Given a univariate translation and scale equivariant location measure $\mu$ and a univariate measure of scale $\varsigma$ which is equivariant and translation invariant, we can define projected outlyingness as 
$$O(x;F;\mu, \varsigma)=\sup_{u\in S^{d-1}}\frac{\left|u^\top x-\mu(F_u)\right|}{\varsigma(F_u)}$$
and thus projection depth as,
$$\PD(x;F;\mu, \varsigma)= \frac{1}{1+O(x;F;\mu, \varsigma)}.$$
\label{dfn::pd}
\end{definition}
Typically, $\mu$ and $\varsigma$ refer to the median and median absolute deviation, but properties have been investigated for general $\mu$ and $\varsigma$. 
One idea is to design $\mu$ and $\varsigma$ such that $O(x;F_n)$ has low global sensitivity, but that is left to later work. 
Here, we will use either
$$O_1(x;F_n)\coloneqq O(x;F_n;\med,\mad)=\sup _{u\in S^{d-1}} \frac{\left|u^\top x-\med\left( \sam{X}{n}^\top u\right)\right|}{\mad\left( \sam{X}{n}^\top u\right)}$$
or
$$O_2(x;F_n)\coloneqq O(x;F_n;\med,\iqr)=\sup _{u\in S^{d-1}} \frac{\left|u^\top x-\med\left( \sam{X}{n}^\top u\right)\right|}{\iqr\left( \sam{X}{n}^\top u\right)}.$$
The global sensitivities of $O_1$, $O_2$ are unbounded, implying that the global sensitivity of $\PD$ is equal to 1. 
Seeing as the range of projection depth is $[0,1)$, a global sensitivity of 1 is high. 
Both $O_1$, $O_2$ have bounded local sensitivities, making projection depth a good candidate for the propose-test-release procedure. 
Note that we use a slight abuse of notation, where $\sam{X}{n}^\top u$ refers to the sample $\lbrace X_1^\top u,\ldots, X_n^\top u\rbrace$. 
We may also refer to the empirical distribution implied by this sample as $F_{n,u}$. 
A thorough investigation of the properties of projection depth was done in the successive papers \citep{zuo2003, Zuo2004}. 
As a result of these papers, it has been shown that projection depth is a statistical depth function, sample projection depth values have a limiting distribution and these sample depth values are robust against outliers. 

\section{Private Data Depth}
There are several ways in which we could approach privatizing depth functions. 
A natural and easy way to do this is to start with a differentially private estimate of the distribution of the data $\widetilde{F}_n$ and use $\D(x,\widetilde{F}_n)$, which is differentially private. 
Computing $\widetilde{F}_n$ relies on existing methods for generating private multidimensional empirical distribution functions. 
Methods based on differentially private estimates of the distribution fail to take advantage of any robustness properties of depth functions; they do not leverage the low sensitivity of the depth function itself. 
This method also does not give a method for computing the sample depth values $\D(X_1,F_n)$, since $\D(X_1,\widetilde{F}_n)$ is not differentially private. 
Computing the sample depth values is often included in depth-based inference \citep[see, e.g.,][]{Li2004, Lange2014}. 
In this paper, we aim to study the advantages of the robustness properties of depth functions in the differentially private setting and so we forgo study of $\D(x,\widetilde{F}_n)$. 

If the global sensitivity of $\D$ is finite, then an obvious private estimate is 
$$\widetilde{\D}(x;F_n)=\D(x;F_n)+V_{\delta,\epsilon}\GS(\D),$$
where $V_{\delta,\epsilon}$ is independent noise, from either the Laplace or Gaussian distribution with the scale parameter calibrated to ensure differential privacy. 
If a depth function has high global sensitivity, then, given the robustness properties of most depth functions, it makes sense to apply a propose-test-release algorithm. 

We can also produce a more direct privatized estimate of $\D(\cdot,F)$ based on the sample, such as has been done with histogram bins \citep{Wasserman2010}. 
For example, many depth functions are defined based on functions of projections: $h(\cdot ;\sam{X}{n}^\top u)$, $u\in \mathscr{U}_n$ where $\mathscr{U}_n$ is some set of directions, i.e., $\mathscr{U}_n\subset S^{d-1}$. 
We could then produce private versions of $\sam{X}{n}^\top u$ or private versions of $h(\cdot ;\sam{X}{n}^\top u)$, if $h$ is insensitive. 
The advantage of this approach would be that the entire depth function could be privatized at once, including the sample depth values. 
In the same vein, recalling that $\nu$ is the uniform measure on $S^{d-1}$, there exists an induced measure, $\gamma_{x,{F_n}}(A)=\nu(h^{-1}(A;x,F_n))$ on the Borel sets of the range of the depth function, i.e., $A\in \mathscr{B}$. 
If $\gamma_{x,{F_n}}$ is insensitive, then we can construct a differentially private estimator based on random draws from $\gamma_{x,{F_n}}$. 
This approach is somewhat complicated, and could be tedious if $\gamma_{x,{F_n}}$ is difficult to sample from. 
Since $\gamma_{x,{F_n}}$ depends on $x$, we would have to set up a different sampler for each $x$ at which we want to compute a depth value. 
We leave these projection type approaches for future research.

From the discussion above, it is clear that a key question is at which points would we like to estimate depth values? 
Algorithms which estimate the depth of a single point are of course of interest; they can be composed to compute depth values at several points privately. 
Additionally, simple algorithms to compute the depth value of a single point can be used as building blocks for private versions of depth based inference procedures. 
As mentioned previously, it is also of interest to compute the depth values of the sample points:
$$\widehat{\mathbf{D}}(F_n)\coloneqq ( \D(X_1;F_n), \D(X_2;F_n),\ldots , \D(X_n;F_n)) . $$
Since $X_i$ appears in both arguments, the sensitivity of $\D(X_i;F_n)$ is larger than that of $\D(x;F_n)$ $x\notin \sam{X}{n}$. 
We investigate private methods of estimating the vector of sample depth values. 
A further question is whether or not we can estimate several depth values from different samples simultaneously, e.g., for use in depth-based clustering. 
To elaborate, if $\sam{X}{n}$ contains the samples for $J$ groups, then $\sam{X}{n}=\sam{X}{n}^1\cup \ldots\cup \sam{X}{n}^J$. 
For example, if we privatize the one dimensional projections of the entire sample $\sam{X}{n}^\top u$ we can then compute the depth of each point in $\sam{X}{n}$ with respect to each group $\sam{X}{n}^j$.

An important question is how well do the privatized inference procedures perform when compared to their non-private counterparts. 
Do the privatized depth values converge to their non-private counterparts? 
If so, what is the rate of convergence? 
Does this private estimate have a limiting distribution? 
If so, is the limiting distribution different from that of the non-private limiting distribution? 
We investigate some of these questions in the next section. 

\section{Algorithms for Private Depth Values}\label{sec::pdv}
As mentioned previously, for depth functions with finite global sensitivity, we can make use of the Gaussian and Laplace mechanisms. 
\begin{mech}
For $x$ given independently of the data, the following estimators
$$\widetilde{\D}_1(x;F_n)=\D(x;F_n)+W\frac{\GS_1(\D)}{\epsilon}\qquad\text{and}\qquad \widetilde{\D}_2(x;F_n)=\D(x;F_n)+Z \frac{\GS_2(\D)\ \sqrt{2\log(1.25/\delta)}}{\epsilon}$$
are $\epsilon$-differentially private and $(\epsilon,\delta)$-differentially private, respectively.
\label{mech::sf}
\end{mech}
\noindent 
The fact that these mechanisms are differentially private follow from the differential privacy of Mechanisms \ref{mech::LM} and \ref{mech::GM}. 
The following results are immediate:
\begin{theorem}
For a given depth function $\D$ and a fixed $x\in \rdd$, suppose that $\sqrt{n}(\widetilde{\D}_\ell(x;F_n)-\D(x;F))\cond \mathcal{V}_{\D}(x),$ where $\cond$ denotes convergence in distribution. 
Suppose we can write $\GS(\D)= C(\D)/n$ where $C(\D)$ does not depend on $n$. 
Let $r >0$ and $\ell=1$ or $\ell=2$. 
For depth values generated under Mechanism \ref{mech::sf}, the following holds
\begin{enumerate}
    \item  For $\delta_n=o(n^{-k})$ and $\epsilon_n=O(n^{-1+r})$, $\widetilde{\D}_\ell(x;F_n)\conp \D(x;F),$
    where $\conp$ denotes convergence in probability. 
    \item  For $\delta_n=o(n^{-k})$ and $\epsilon_n=O(n^{-1/2+r})$, $\sqrt{n}(\widetilde{\D}_\ell(x;F_n)-\D(x;F))\cond \mathcal{V}_{\D}(x).$
\end{enumerate}
\label{thm::gshs}
\end{theorem}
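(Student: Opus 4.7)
The plan is to decompose the private estimator into its non-private counterpart plus additive noise, and then control the noise term using the rates on $\epsilon_n$ and $\delta_n$.  Specifically, for $\ell\in\{1,2\}$, write
\begin{equation*}
\widetilde{\D}_\ell(x;F_n)-\D(x;F)=\bigl[\D(x;F_n)-\D(x;F)\bigr]+V_{n,\ell},
\end{equation*}
where $V_{n,1}=W\,C(\D)/(n\epsilon_n)$ and $V_{n,2}=Z\,C(\D)\sqrt{2\log(1.25/\delta_n)}/(n\epsilon_n)$ (using $\GS(\D)=C(\D)/n$ in both mechanisms).  Since $W$ and $Z$ are independent of the sample with fixed distributions, $V_{n,\ell}$ is an $O_p(1)$ random variable scaled by a deterministic factor; this reduces both parts of the theorem to a rate calculation plus a single application of Slutsky's theorem.

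For part 1 (consistency), I would argue that $\D(x;F_n)-\D(x;F)\conp 0$ because the stated weak limit $\sqrt{n}[\D(x;F_n)-\D(x;F)]\cond \mathcal{V}_\D(x)$ makes the centered sample depth $O_p(n^{-1/2})$.  For the noise, $\ell=1$ gives scale $C(\D)/(n\epsilon_n)=O(n^{-r})\to 0$ under $\epsilon_n=O(n^{-1+r})$, so $V_{n,1}\conp 0$.  For $\ell=2$, the scale is $C(\D)\sqrt{2\log(1.25/\delta_n)}/(n\epsilon_n)$; under $\delta_n=o(n^{-k})$ (interpreted as polynomially small, so $\log(1/\delta_n)=O(\log n)$), this is $O(n^{-r}\sqrt{\log n})\to 0$, hence $V_{n,2}\conp 0$.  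Adding the two vanishing terms via the continuous mapping theorem gives consistency.

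For part 2 (asymptotic distribution), I would multiply through by $\sqrt{n}$ to get
\begin{equation*}
\sqrt{n}\bigl[\widetilde{\D}_\ell(x;F_n)-\D(x;F)\bigr]=\sqrt{n}\bigl[\D(x;F_n)-\D(x;F)\bigr]+\sqrt{n}\,V_{n,\ell}.
\end{equation*}
The first summand converges in distribution to $\mathcal{V}_\D(x)$ by hypothesis.  For $\ell=1$, the scale of $\sqrt{n}\,V_{n,1}$ is $C(\D)/(\sqrt{n}\epsilon_n)=O(n^{-r})\to 0$ under $\epsilon_n=O(n^{-1/2+r})$; for $\ell=2$ it is $C(\D)\sqrt{2\log(1.25/\delta_n)}/(\sqrt{n}\epsilon_n)=O(n^{-r}\sqrt{\log n})\to 0$ under the same polynomial bound on $\delta_n$.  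Hence $\sqrt{n}V_{n,\ell}\conp 0$, and Slutsky's theorem delivers the desired weak convergence.

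The only real subtlety is how to read the hypothesis $\delta_n=o(n^{-k})$: one has to treat it as "polynomially small but not faster than any fixed polynomial rate," so that $\log(1/\delta_n)$ grows only logarithmically and the Gaussian-mechanism scaling factor $\sqrt{\log(1/\delta_n)}$ is absorbed by $n^{r}$.  No depth-theoretic input beyond the assumed weak limit and the sensitivity bound $\GS(\D)=C(\D)/n$ is needed; the remaining manipulations are standard Slutsky-type arguments.
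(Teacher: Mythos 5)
Your proposal is correct and follows essentially the same route as the paper, whose entire proof is the observation that the result "follows directly from consistency of the sample depths and the fact that $\GS(\D)\rightarrow 0$" — i.e., exactly your decomposition into sampling error plus calibrated noise, with the noise scale killed by the rates on $\epsilon_n$, $\delta_n$ and a Slutsky argument. Your explicit treatment of the Gaussian factor $\sqrt{\log(1.25/\delta_n)}$ and the reading of $\delta_n=o(n^{-k})$ as polynomially small is a more careful spelling-out of details the paper leaves implicit, but it is not a different method.
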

It should be noted that choosing $\delta=o(n^{-k})$ and $\epsilon=O(n^{-1/2+r})$ maintains a reasonable level of privacy. 
For example, choosing $\epsilon\in O(1)$ and $\delta<1/n$ is ``the most-permissive setting under which $(\epsilon,\delta)$-differential privacy is a nontrivial guarantee'' \citep{Cai2019}. 
From Theorem \ref{thm::gshs} we can conclude that for large samples and small privacy parameters, depth value estimates generated via Mechanism \ref{mech::sf} are minimally affected by privatization. 

What if we want to calculate the depth value of a sample point? 
How we can estimate the vector of depth values at the sample points, i.e.,  
\begin{equation}
    \widehat{\mathbf{D}}(F_n)=( \D(X_1;F_n), \D(X_2;F_n),\ldots , \D(X_n;F_n)) 
    \label{eqn::vecd}
\end{equation}
privately?
The sample values now appear in both arguments of $\D$ and so we must do a bit more work to compute the global sensitivity. 
First we look at halfspace and IRW depth. 
Consider one set of projections $\sam{X}{n}^\top u$ and their corresponding empirical distribution $F_{n,u}$. 
We want to compute the sensitivity of the vector $\mathbf{R}_{n,u}=(R_{1,u},\ldots,R_{n,u})$, with
$$R_{i,u}=\min\{F_{n,u}(X_{i}^\top u),1-F_{n,u}(X_{i}^\top u-) \},$$
and $F(x-)=P(X<x)$. 
If we change one observation, if $n$ is odd, then at most $n-1$ ranks can be changed by at most 1, and thus, at most $n-1$ values of $\mathbf{R}_{n,u}$ would change by at most $1/n$. 
In the even case, it is easily seen that at most $n-2$ values of $\mathbf{R}_{n,u}$ will change by at most $1/n$. 
Alternatively, we can change one depth value by $n^{-1}(\lfloor(n+1)/2\rfloor-1)$ and $\lfloor(n+1)/2\rfloor-1$ values by $1/n$. 
This gives that 
$$\GS_1(\mathbf{R}_{n,u})=\frac{2}{n}\left(\left\lfloor\frac{n+1}{2}\right\rfloor-1\right)\approx 1,$$
and that
$$\GS_2(\mathbf{R}_{n,u})=\frac{1}{n}\sqrt{\left(\left\lfloor\frac{n+1}{2}\right\rfloor-1\right)^2+\left\lfloor\frac{n+1}{2}\right\rfloor-1}\approx 1. $$
Since averaging or taking the supremum over such $\mathbf{R}_{n,u}$ does not affect these sensitivities, it follows that for halfspace depth and IRW depth $\GS_\ell(\mathbf{D})=\GS_\ell(\mathbf{R}_{n,u}).$ 
Concerning simplicial depth, with respect to some adjacent dataset, the depth values of the unchanged points can each change by at most $(d+1)/n$. 
For the point that is different, we can bound the sensitivity above by $1-(d+1)/n$. 
It follows that $\GS_1(\mathbf{SD})\leq 1$ and $\GS_2(\mathbf{SD})\leq\sqrt{(1-(d+1)/n)^2+((d+1)/n)^2}\approx 1$,  where $\mathbf{SD}$ is the vector $\mathbf{D}$ in \eqref{eqn::vecd} with $\D=\SMD$. 
In summary, the global sensitivities of the vector of sample depth values for halfspace, IRW and simplicial depth are all close to 1. 
This means that when computing $n$ depth values from halfspace, IRW or simplicial depth privately, we pay approximately the same privacy budget regardless of whether or not the points at which the depth is being computed are in the sample. 
In other words, we do not use any extra privacy budget for the fact that we are computing the depth at the sample values, provided we are computing $n$ of them. 
We can then use the following mechanism to estimate the vector of depth values:
\begin{mech}
The following estimators for the vector of depth values of the sample points 
$$\widetilde{\mathbf{D}}_1(F_n)=\mathbf{D}(F_n)+(W_1,\ldots,W_n) \frac{\GS_1(\mathbf{D})}{\epsilon}\qquad\text{and}\qquad \widetilde{\mathbf{D}}_2(F_n)=\mathbf{D}(F_n)+(Z_1,\ldots,Z_n)\frac{\GS_2(\mathbf{D})\ \sqrt{2\log(1.25/\delta)}}{\epsilon}$$
are $\epsilon$-differentially private and $(\epsilon,\delta)$-differentially private, respectively.
\label{mech::sfvec}
\end{mech}
The fact that these mechanisms are differentially private follow from the differential privacy of Mechanisms \ref{mech::LM} and \ref{mech::GM}.
For the full vector of sample depth values we do not get privacy for free in the limit. 
For many depth functions, certainly for halfspace depth, IRW depth and simplicial depth, we have that 
$$\sup_x|\D(x;F_n)-\D(x;F)|=O_p(n^{-1/2}),$$
which gives that 
$$\norm{\mathbf{D}(F_n)-\mathbf{D}(F)}\leq \sqrt{n\left(\sup_x|\D(x;F_n)-\D(x;F)|\right)^2} =O_p(1).$$
Then, from the triangle inequality we have that
{\footnotesize
\begin{align}
\label{eqn::dep_vec}
    \norm{\widetilde{\mathbf{D}}_1(F_n)-\mathbf{D}(F)}&\leq \norm{\widetilde{\mathbf{D}}_1(F_n)-\mathbf{D}(F_n)}+\norm{\mathbf{D}(F_n)-\mathbf{D}(F)}=\norm{(W_1,\ldots,W_n) \GS(\mathbf{D})/\epsilon}+O_p(1)=O_p(n^{1/2}). 
\end{align}}
For $\widetilde{\mathbf{D}}_2$ with $\delta\propto n^{-k}$, we have that
$$\norm{\widetilde{\mathbf{D}}_2(F_n)-\mathbf{D}(F)}\leq O_p(n^{1/2}\log^{1/2}{n}).$$
The level of noise is greater than that of the sampling error for both of these private estimates of the vector of depth values at the sample points. 
It is important that this be accounted for when developing inference procedures based on these privatized estimates. 
This result is somewhat intuitive; these vectors reveal more information about the population as $n$ grows, which differs markedly from the single depth value case, where the amount of information received is fixed in $n$. 
In fact, for large $n$ the vector of depth values at the sample points contains a significant amount of information about $F$; the population depth function can, under certain conditions, characterize the distribution of $F$ \citep[see][and the references therein]{Nagy2018}. 
To release so much information about the population privately, we need to inject greater than negligible noise. 

We now turn our attention to a depth function with high global sensitivity: projection depth. 
For projection depth, we would like to generate private outlyingness values, which have unbounded sensitivity. 
Note that $\med,\ \mad,\ \iqr$ are all robust statistics, in the sense that they are not perturbed by extreme data points. 
This implies that $O_1$ and $O_2$ have an unlikely chance of worst case sensitivity, which would make projection depth a good candidate for the propose-test-release framework \citep{Dwork2009, Brunel2020}. 
Suppose that $\iqr(F_{n,u})\approx 1$ for all $u$. 
If $$\eta\gtrapprox\max(F^{-1}_{n,u}(1/2)-F^{-1}_{n,u}(1/2-1/n),F^{-1}_{n,u}(1/2+1/n)-F^{-1}_{n,u}(1/2))$$
for all $u$ then $A_\eta\approx\floor{n/2}-1$, which means it is very unlikely that Mechanism \ref{mech::PTR} will return $\perp$. 
This is the basis for introducing the following mechanism:
\begin{mech} 
For $\ell=1$ or $\ell=2$, define privatized projection depth as 
$$\widetilde{\PD}_\ell(x;F_n)=\frac{1}{1+\widetilde{O}_\ell(x;F_n)},$$
where
\begin{equation*}
\widetilde{O}_\ell(x;F_n)=\left\{  \begin{array}{ll}
   \perp  & \text{if } A_\eta(O_\ell(x;F_n);\sam{X}{n})+\frac{a_\delta}{\epsilon} V_1\leq 1+\frac{b_\delta}{\epsilon} \\
    O_\ell(x;F_n)+\frac{\eta a_\delta}{\epsilon}V_2 & o.w.
\end{array} \right.
\end{equation*}
where $a_\delta,\ b_\delta$, and $V_j$ are either as in \eqref{eqn::ptr_l} or as in \eqref{eqn::ptr_g} and the corresponding level of privacy is as in Mechanism \ref{mech::PTR}. 
\label{mech::PTR_PD}
\end{mech}
\noindent One thing to note is that Mechanism \ref{mech::PTR_PD} can be used to estimate both in sample and out of sample points. 
We can actually show that this algorithm is consistent for the population depth values when using $O_2$ as the outlyingness measure. 
\begin{theorem}
Let $\xi_{p,u}$ be the $p^{th}$ quantile of $F_u$. 
Suppose that for all $h>0$, and all $u\in S^{d-1}$
$$|F_u(\xi_{p,u}+h)-F_u(\xi_{p,u})|=M|h|^q(1+O(|h|^{q/2})\,)\text{ with }M>0,\ q>0,$$
for $p=1/4,\ 1/2,\ 3/4$. 
Suppose that $\sup_u \xi_{p,u}<\infty$ for $p=1/4,\ 3/4$. 
For $\eta\propto\frac{\log n}{n^{3/4-r}}$ with $r>0$, $\delta_n=O(n^{-k})$ and $\frac{n^{1/4}(\log\log n)^{3/4}}{\log \delta_n}\epsilon_n\rightarrow\infty$ it holds that
$$|\widetilde{\PD}_2(x;F_n)-\PD(x;F;\med,\iqr)|\conp 0.$$
\label{thm::pd_con}
\end{theorem}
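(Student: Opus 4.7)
I would reduce consistency of $\widetilde{\PD}_2$ to consistency of $\widetilde{O}_2$: since $y\mapsto 1/(1+y)$ is continuous on a neighbourhood of $[0,O(x;F;\med,\iqr)]$, the continuous mapping theorem transfers $\widetilde{O}_2(x;F_n)\conp O(x;F;\med,\iqr)$ to the statement of the theorem, provided we also know the mechanism rarely aborts. Accordingly, I would split into the ``failure'' event $E_n=\{\widetilde{O}_2=\perp\}$ and its complement, and argue $P(E_n)\to 0$ and that the numerical output converges on $E_n^c$.

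For $P(E_n)\to 0$, I would establish a high-probability lower bound on the truncated breakdown point $A_\eta(O_2;\sam{X}{n})$. Fix $u\in S^{d-1}$ and write $g(m,s)=|u^\top x-m|/s$; its partial derivatives are bounded whenever $s$ stays away from $0$. Uniform-in-$u$ consistency of $\med(F_{n,u})$ and $\iqr(F_{n,u})$, together with $\sup_u \xi_{p,u}<\infty$ at $p=1/4,3/4$ and the Hölder condition, imply that $\inf_u \iqr(F_{n,u})$ is bounded below by a positive constant with probability tending to $1$. Hence shifting $O_2(x;F_n)$ by $\eta$ forces a shift of size $\gtrsim \eta$ in either $\med(F_{n,u})$ or $\iqr(F_{n,u})$ for some $u$; by the Hölder condition at the corresponding quantile, this in turn forces at least of order $n\eta^q$ observations to be altered, yielding $A_\eta(O_2;\sam{X}{n})\gtrsim n\eta^q$. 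Plugging in $\eta\propto \log n/n^{3/4-r}$ and noting that the hypothesis $n^{1/4}(\log\log n)^{3/4}/|\log\delta_n|\cdot\epsilon_n\to\infty$ gives $|\log\delta_n|/\epsilon_n=o(n^{1/4}(\log\log n)^{3/4})$, one checks $n\eta^q\gg 1+b_\delta/\epsilon_n$, so the PTR test accepts with probability tending to $1$.

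On $E_n^c$ the mechanism outputs $O_2(x;F_n)+\tfrac{\eta a_\delta}{\epsilon_n}V_2$, and the triangle inequality gives
\begin{equation*}
\bigl|\widetilde{O}_2(x;F_n)-O(x;F;\med,\iqr)\bigr|\leq \bigl|O_2(x;F_n)-O(x;F;\med,\iqr)\bigr|+\tfrac{\eta a_\delta}{\epsilon_n}|V_2|.
\end{equation*}
The first term tends to $0$ in probability by uniform-in-$u$ consistency of the sample median and IQR of the projections (a Glivenko–Cantelli / empirical process argument on the compact index set $S^{d-1}$, using the Hölder condition at $p=1/4,1/2,3/4$ and $\sup_u\xi_{p,u}<\infty$) followed by continuity of the sup operation. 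For the second, since $\delta_n=O(n^{-k})$ one has $a_\delta=O(\sqrt{\log n})$, and the same rate hypothesis yields $\eta a_\delta/\epsilon_n\to 0$, hence $\tfrac{\eta a_\delta}{\epsilon_n}V_2\conp 0$ for either the Laplace or Gaussian version of Mechanism \ref{mech::PTR_PD}.

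The main technical obstacle is the uniform-in-$u$ control required in two distinct places: (i) translating the pointwise Hölder condition on each $F_u$ into a uniform-in-$u$ lower bound on $A_\eta$ for the sup-over-directions statistic $O_2$, and (ii) upgrading pointwise consistency of quantile functionals $F_{n,u}^{-1}(p)$ into consistency of the supremum defining $O_2$. Both rest on a covering/compactness argument on $S^{d-1}$ combined with the equicontinuity-type estimates supplied by the Hölder condition; the rest of the proof is book-keeping on the rates $\eta,\epsilon_n,\delta_n$.
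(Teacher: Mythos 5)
Your reduction (continuous mapping through $y\mapsto 1/(1+y)$, splitting on the abort event, bounding the injected noise by $\eta a_\delta/\epsilon_n\to 0$, and consistency of the non-private $O_2$) matches the paper, which treats those points briefly; your derivative/Taylor control of the ratio also parallels the paper's expansion of $x/y$ about $(|x^\top u-\med(F_u)|,\iqr(F_u))$. The genuine gap is in the one step that carries the theorem's content: the high-probability lower bound on $A_\eta$. You claim $A_\eta(O_2;\sam{X}{n})\gtrsim n\eta^q$ and then assert that ``one checks $n\eta^q\gg 1+b_{\delta_n}/\epsilon_n$.'' With $\eta\propto \log n/n^{3/4-r}$ one has $n\eta^q\asymp(\log n)^q\,n^{1-q(3/4-r)}$, while the hypothesis on $\epsilon_n$ only guarantees $b_{\delta_n}/\epsilon_n=o\bigl(n^{1/4}(\log\log n)^{3/4}\bigr)$; the comparison therefore requires $1-q(3/4-r)\geq 1/4$, i.e.\ roughly $q\leq 3/(3-4r)$. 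The theorem allows any $q>0$, and for instance with $q=2$ and small $r$ your lower bound $n\eta^q\asymp(\log n)^2 n^{-1/2+2r}$ tends to zero, so the acceptance step fails. Worse, the counting bound is essentially sharp (moving the empirical median by $\eta$ costs about $nM\eta^q$ changes), so this route cannot be repaired by sharpening constants; a structurally different argument is needed outside the $q\lesssim 1$ regime.

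The paper argues in the opposite direction and thereby avoids quoting any lower bound of the form $n\eta^q$. After conditioning on $|W_1|\leq\log(2/\delta_n)$ (cost $O(n^{-k})$), it fixes the number of changed points at $k^*=1+\lfloor 2\log(2/\delta_n)/\epsilon_n\rfloor$ and uses the deterministic fact that changing $k^*$ points can move $\med(\sam{Y}{n}^\top u)$ only within $[F_{n,u}^{-1}(1/2-k^*/n),F_{n,u}^{-1}(1/2+k^*/n)]$ (similarly for the quartiles entering $\iqr$). It then applies the Bahadur-type representation of De Haan and Taconis-Haantjes to the difference of the two quantile expansions: the leading empirical-process terms cancel, leaving a displacement of order $O(n^{-3/4}\log n)$ almost surely, uniformly in $u$ because $\sup_u\xi_{p,u}<\infty$ controls the remainders. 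The condition $\frac{n^{1/4}(\log\log n)^{3/4}}{\log\delta_n}\epsilon_n\to\infty$ is used exactly to ensure $k^*/n=O\bigl((\log\log n/n)^{3/4}\bigr)$ so that this representation applies, and then $\eta\propto\log n\,n^{-(3/4-r)}$ dominates $n^{-3/4}\log n$, giving $A_\eta>k^*$ with probability tending to one. If you want to keep your mass-counting argument, you must either restrict $q$ as above or replace that step by a displacement bound of the paper's type (quantile spacings at levels $1/2\pm k^*/n$ plus a Bahadur-type expansion), which is where the specific rates $\eta$, $\epsilon_n$, $\delta_n$ in the statement actually enter.
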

Theorem \ref{thm::pd_con} shows we can choose both $\epsilon_n$ and $\eta_n$ decreasing in $n$ and still maintain a consistent estimator. 
In fact, $\eta_n$ can be chosen to be quite small relative to the size of the sample. 
Under the Laplace version of Mechanism \ref{mech::PTR_PD}, if the statistic is released, recall that the scale parameter is proportional to $\eta\epsilon^{-1}$. 
This means that Mechanism \ref{mech::PTR_PD} injects a smaller amount of noise into the depth value than would Mechanism \ref{mech::sf}, however, this is paid for with the privacy budget of $(2\epsilon,\delta)$, rather than a budget of just $\epsilon$. 
The difference in noise is on the order of $(\log\log n)^{-3/4+r'},\ r'>0$, therefore, in smaller samples the gain would be negligible. 

In terms of computing this estimator, the difficulty lies in computing $A_\eta$ for a given dataset. 
This is non-trivial for projection depth, as the ratio of estimators makes the computation difficult. 
We can approximate the depth value by computing 
$$\widehat{O}_\ell=\max_{u\in U_1,\ldots,U_m} \frac{|x^\top u-\med(\sam{X}{n}^\top u )|}{\varsigma_\ell(\sam{X}{n}^\top u )},$$
instead of computing 
$$O_\ell=\sup_{u\in S^{d-1}} \frac{|x^\top u-\med(\sam{X}{n}^\top u )|}{\varsigma_\ell(\sam{X}{n}^\top u )},$$ 
where $U_1,\ldots,U_m$ are sampled uniformly from $S^{d-1}$. 
Then, we can compute the truncated breakdown point of each $$\widehat{O}^{U_j}_\ell(x)=\frac{|x^\top U_j-\med(\sam{X}{n}^\top U_j )|}{\varsigma_\ell(\sam{X}{n}^\top U_j)},$$
to construct an approximation of the breakdown value $A_\eta$. 
It may also be possible to compute this estimator exactly using techniques from computational geometry \citep[see, e.g.,][]{Liu2014}. 

Suppose that $\sam{Y}{n}\in \mathcal{D}(\sam{X}{n},k^*)$, where $k^*=1+\frac{b_{\delta}}{\epsilon}-\frac{a_{\delta}}{\epsilon} V_{1}$. 
We first an algorithm to check if  $A_\eta\left(\widehat{O}^{u}_2(x);\sam{X}{n}\right) > k^*$.  
To this end, note that $$|x-\med(\sam{Y}{n}^\top u)|\leq \max\{|x-F_{n,u}^{-1}(1/2+k^*/n)|,|x-F_{n,u}^{-1}(1/2-k^*/n)|\}$$
and that
$$|x-\med(\sam{Y}{n}^\top u)|\geq \min\{|x-F_{n,u}^{-1}(1/2+k^*/n)|,|x-F_{n,u}^{-1}(1/2-k^*/n)|,|x-m_1(u)|,|x-m_2(u)|\},$$
with $m_1(u)$ being the median of a dataset the same as $\sam{X}{n}^\top u$, except that the smallest $k^*$ observations of $\sam{X}{n}^\top u$ are replaced with $x^\top u$ and $m_2(u)$ being the same as $m_1(u)$, except instead the largest $k^*$ observations of $\sam{X}{n}^\top u$ are replaced. 
Define
$$\mathcal{B}=\{F_{n,u}^{-1}(3/4+k_1/n)-F_{n,u}^{-1}(1/4+k_2/n)\colon -k^*\leq k_1,k_2\leq k^*,\ |k_1|+|k_2|=k^*\},$$
with
$$\min \mathcal{B}\leq \iqr(\sam{Y}{n}^\top u)\leq \max\mathcal{B}.$$
We can summarise these bounds by letting
\begin{align*}
    \mathrm{up}(\med,u)&=\max\{|x-F_{n,u}^{-1}(1/2+k^*/n)|,|x-F_{n,u}^{-1}(1/2-k^*/n)|\},\\
    \mathrm{lo}(\med,u)&=\min\{|x-F_{n,u}^{-1}(1/2+k^*/n)|,|x-F_{n,u}^{-1}(1/2-k^*/n)|,|x-m_1(u)|,|x-m_2(u)|\},\\
    \mathrm{lo}(\iqr,u)&=\min \mathcal{B},\\
    \mathrm{up}(\iqr,u)&=\max\mathcal{B}.
\end{align*}
Using this notation, we can write
$$\widehat{O}^{u}_\ell(x)\in \left[\frac{\mathrm{lo}(\med,u)}{\mathrm{up}(\iqr,u)},\frac{\mathrm{up}(\med,u)}{\mathrm{lo}(\iqr,u)}\right]=\left[ \mathrm{lo}(\widehat{O}^{u}_\ell(x)), \mathrm{up}(\widehat{O}^{u}_\ell(x))\right]$$
and we can check if
\begin{equation}
    \max\{\widehat{O}^{u}_\ell(x)-\mathrm{lo}(\widehat{O}^{u}_\ell(x)),\mathrm{up}(\widehat{O}^{u}_\ell(x))-\widehat{O}^{u}_\ell(x)\}<\eta
    \label{eqn:bda}
\end{equation}
Then, if \eqref{eqn:bda} holds for any $u\in \{U_1,\ldots,U_m\}$, we must have that $A_\eta(\widehat{O}_2(x;F_n),\sam{X}{n}) > k^*$, 
which gives a lower bound on the truncated breakdown point. 
This lower bound can be used when implementing Mechanism \ref{mech::PTR_PD}, in the `test' portion of the algorithm. 

The methods used to construct private depth values discussed in this section can be used to privatize inference procedures based solely on functions of sample depth values.  
For example, a common way to compare scale between two multivariate samples, say $\sam{X}{n_1}$ and $\sam{Y}{n_2}$, is to compute the sample depth values with respect to the empirical distribution of the pooled sample $\sam{X}{n_1}\cup \sam{Y}{n_2}$ \citep{Li2004, Chenouri2011}. 
We can denote this empirical distribution by $G_{n_1+n_2}$. 
Private depth-based ranks could then be defined as
$$\widetilde{\R}_{ji}=\#\{ X_{k\ell}\colon \ \widetilde{\D}(X_{k\ell};G_{n_1+n_2})\leq\widetilde{\D}(X_{ji};G_{n_1+n_2}),\ j,k\in\{1,2\}\ i\in\{1,\ldots,n_j\},\ \ell\in\{1,\ldots,n_k\} \},$$
where $X_{ji}$ is the $i^{th}$ observation from sample $j$. 
We can use these ranks to privately test for a difference in scale between the two groups with the rank sum test statistic, viz. 
$$\widetilde{T}(\sam{X}{n_1}\cup\sam{Y}{n_2})=\sum_{i=1}^{n_1} \widetilde{\R}_{ji}.$$
The distribution of such a statistic remains the same under the null hypothesis, and \eqref{eqn::dep_vec} can be used to assess its performance under the alternative hypothesis. 
It is clear that the power of the test will be reduced, as the noise biases the statistic toward failing to reject the null hypothesis. 
We can also take a similar approach in multivariate, covariance change-point models \citep{Chenouri, RAMSAY2020b}. 
The algorithms of this section cannot be used to compute private depth-based medians, i.e., private maximizers of the depth functions, and so we investigate algorithms to compute depth-based medians in the next section.
\section{Private Multivariate Medians}\label{sec::pm}
For depth functions with finite global sensitivity, it is natural to estimate the depth-based median using the exponential mechanism (Mechanism \ref{mech::expMech}).  
As such, we could generate an observation from
$$f(v;F_n)\propto \exp\left(\dfrac{\epsilon}{2\GS(\D)}\D(v;F_n)\right),$$
to be used as a private estimate of the $\D$-based median. 
One issue is that this density is not necessarily valid. 
For example, 
$$f(v;F_n)= \frac{\exp\left(\dfrac{\epsilon}{2\GS(\HD)}\HD(v;F_n)\right)}{\int_{\rdd}\exp\left(\dfrac{\epsilon}{2\GS(\HD)}\HD(v;F_n)\right)dv},$$
is not a valid density, since $\int_{\rdd}\exp\left(\dfrac{\epsilon}{2\GS(\D)}\D(v;F_n)\right)dv=\infty$. 
To see this, note that 
$$1<\exp\left(\dfrac{\epsilon}{2\GS(\HD)}\HD(v;F_n)\right)<\infty$$
and so even if we transform this to 
\begin{align*}
\exp\left(-\dfrac{\epsilon}{2\GS(\HD)}(\alpha-\HD(v;F_n))\right),
\end{align*} 
it is still bounded below for any $\alpha$. This implies that 
$$\int_{\rdd}\exp\left(\dfrac{\epsilon}{2\GS(\HD)}\HD(v;F_n)\right)dv=\infty.$$
Similar results follow for the remaining depth functions, since they all have a range that lies in a positive, bounded interval. 
If the data for which we would like to estimate the median is within some compact set $B$, then we can easily reduce the range of the estimator to $B$ and the density 
$$f(v;F_n)= \frac{\exp\left(\dfrac{\epsilon}{2\GS(\HD)}\HD(v;F_n)\right)\ind{v\in B}}{\int_{B}\exp\left(\dfrac{\epsilon}{2\GS(\HD)}\HD(v;F_n)\right)dv},$$
is valid. 
If there is no clear set $B$ in which the median will lie then we propose a Bayesian inspired approach, and recommend using a prior $\pi(v)$ on the median such that 
$$f(v;F_n)= \frac{\exp\left(\dfrac{\epsilon}{2\GS(\HD)}\HD(v;F_n)\right)\pi(v)}{\int_{\rdd}\exp\left(\dfrac{\epsilon}{2\GS(\HD)}\HD(v;F_n)\right)\pi(v)dv},$$
is a valid density. 
Seeing as $\ind{v\in B}$ normalized by $\int_B dv$ is the density of the uniform distribution over $B$, it is a special case of a prior and we can summarise this procedure as follows:
\begin{mech}
Suppose that $\GS(\D)=C(\D)/n$. Suppose also that $\pi(v)$ is a density chosen independently of the data. 
Provided  
\begin{equation}
    f(v;F_n)=\frac{\exp\left(\dfrac{n\epsilon}{2C(\D)}\D(v;F_n)\right)\pi(v)}{\int_{\mathbb{R}^d} \exp\left(\dfrac{n\epsilon}{2C(\D)}\D(v;F_n)\right)\pi(v)dv},
    \label{eqn::dens}
\end{equation}
is a valid Lebesgue density, a random draw from $f(v;F_n)$ is an $\epsilon$-differentially private estimate of the $\D$-based median of $\sam{X}{n}$.
\label{mech::exp_med}
\end{mech} 
It is imperative that this prior is chosen independently of the data or the privacy of the procedure will be violated. 
For any depth function whose range is a bounded interval, it is easy to see that \eqref{eqn::dens} is a valid density. 
Suppose that the range of $\D$ is [0,1], then the following inequality holds
\begin{align*}
   1= \int_{\mathbb{R}^d} \pi(v)dv\leq \int_{\mathbb{R}^d} \exp\left(\dfrac{n\epsilon}{2C(\D)}\D(v;F_n)\right)\pi(v)dv&\leq \int_{\mathbb{R}^d} \exp\left(\dfrac{n\epsilon}{2C(\D)}\right)\pi(v)dv=\exp\left(\dfrac{n\epsilon}{2C(\D)}\right).
\end{align*}
Some asymptotic properties of the exponential mechanism have been investigated by \cite{Awan2019}, but their result requires that the cost function is twice differentiable and convex. 
Depth functions do not typically satisfy these requirements. 
The following lemma is useful for proving some asymptotic results related to the exponential mechanism, when the cost function is not necessarily differentiable, but smooth at the limiting minimizer
\begin{lem}
Let $\mathbf{0}$ be the zero vector in $\rdd$ and $\pi(v)$ be a density on $\rdd$. 
Suppose that $\phi_n(\omega, v)\colon \Omega\times \mathbb{R}^d\rightarrow\mathbb{R}^+$ is a sequence of random functions on the probability space $(\Omega,\mathscr{A},P)$. 
Assume that 
\begin{enumerate}
\item $\lambda_n=Cn^r$ for some $C>0$.
\item For $s>r$, $\norm{\phi_n(\omega, \cdot)-\phi(\omega, \cdot)}_\infty=\sup_{v\in\rdd}\norm{\phi_n(\omega, v)-\phi(\omega, v)}=o_p(n^{-s})$.
\item For some $\alpha >0$, $\phi(\omega, v)$ is $\alpha$-H\"{o}lder continuous in $v$ in a neighborhood around $\mathbf{0}$ $P-$almost surely. 
This means that $|\phi(\omega,v)-\phi_(\omega,\mathbf{0})|\leq C_1 \norm{v}^\alpha$ for some constant $C_1$. 
\item  $\phi(\omega, v)=0$ if and only if $v=\mathbf{0}$ $P-$almost surely; $\phi$ is uniquely minimized at $v=\mathbf{0}$ $P-$almost surely. 
\item $\pi(v)$ is a bounded Lebesgue density which is positive in some neighborhood around $\mathbf{0}$. 
\end{enumerate}
Let $V_n$ be a sequence of random vectors whose distribution on $\mathbb{R}^d$ is given by the measure
$$Q_n(A)= \int_\Omega\int_A\dfrac{e^{-\lambda_n\phi_n(\omega,v)}\pi(v)dv}{\int_{\mathbb{R}^d}  e^{-\lambda_n\phi_n(\omega,v)}\pi(v)dv}dP,$$
for $A\in \mathscr{B}(\mathbb{R}^d)$. 
Then $V_n\conp \mathbf{0}$. 
\label{lem::exp_m}
\end{lem}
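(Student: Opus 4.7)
The plan is to establish $V_n \conp \mathbf{0}$ by showing $P(\|V_n\| > \eta) \to 0$ for every fixed $\eta > 0$, via a Laplace-method analysis of the Gibbs measure. By Fubini,
\[
P(\|V_n\| > \eta) = \int_\Omega R_n(\omega)\, dP, \qquad R_n(\omega) := \frac{\int_{\|v\|>\eta} e^{-\lambda_n \phi_n(\omega,v)}\, \pi(v)\, dv}{\int_{\rdd} e^{-\lambda_n \phi_n(\omega,v)}\, \pi(v)\, dv},
\]
and since $0 \leq R_n \leq 1$, it suffices to show $R_n \conp 0$. The first step is to decouple the randomness of $\phi_n$ from the ratio. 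Pick $s' \in (r,s)$, set $\tau_n = n^{-s'}$, and consider the good event $A_n = \{\|\phi_n - \phi\|_\infty \leq \tau_n\}$; by assumption 2, $P(A_n) \to 1$. On $A_n$, the sandwich $e^{-\lambda_n \tau_n} e^{-\lambda_n\phi(v)} \leq e^{-\lambda_n \phi_n(v)} \leq e^{\lambda_n \tau_n} e^{-\lambda_n \phi(v)}$ yields $R_n \leq e^{2\lambda_n \tau_n}\, \widetilde R_n$, where $\widetilde R_n$ is defined by replacing $\phi_n$ with $\phi$. Since $\lambda_n \tau_n = C n^{r-s'} \to 0$, the prefactor is $1+o(1)$.

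Next I will lower bound the denominator of $\widetilde R_n$ using the Hölder control at $\mathbf{0}$ and upper bound the numerator using identifiability. Let $\rho_0 > 0$ be small enough that $B_{\rho_0}$ lies in the Hölder neighborhood of assumption 3. Since $\phi(\omega,\mathbf{0}) = 0$ by assumption 4, assumption 3 gives $\phi(\omega,v) \leq C_1 \rho_0^\alpha$ for $v \in B_{\rho_0}$, and by assumption 5 we have $m(\rho_0) := \int_{B_{\rho_0}} \pi(v)\, dv > 0$. Hence
\[
\int_{\rdd} e^{-\lambda_n \phi(\omega,v)}\pi(v)\, dv \;\geq\; e^{-\lambda_n C_1 \rho_0^\alpha}\, m(\rho_0).
\]
For the numerator, defining $\phi_\eta(\omega) := \inf_{\|v\|\geq \eta} \phi(\omega,v)$, assumption 4 (together with sufficient regularity of $\phi$) gives $\phi_\eta(\omega) > 0$ $P$-a.s., so
\[
\int_{\|v\|>\eta} e^{-\lambda_n \phi(\omega,v)}\pi(v)\, dv \;\leq\; e^{-\lambda_n \phi_\eta(\omega)} \int_{\|v\|>\eta} \pi(v)\, dv \;\leq\; e^{-\lambda_n \phi_\eta(\omega)}.
\]

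Combining, on $A_n$,
\[
R_n(\omega) \;\leq\; \frac{e^{2\lambda_n \tau_n}}{m(\rho_0)}\, e^{-\lambda_n (\phi_\eta(\omega) - C_1 \rho_0^\alpha)}.
\]
Choosing $\rho_0 = \rho_0(\omega)$ small enough (measurably in $\omega$) that $C_1 \rho_0^\alpha < \phi_\eta(\omega)/2$ forces the exponent to $-\infty$, so $R_n \indtw_{A_n} \to 0$ a.s. Together with $R_n \leq 1$ and $P(A_n^c) \to 0$, this gives $R_n \conp 0$, and bounded convergence then yields $P(\|V_n\| > \eta) \to 0$.

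The main obstacle is establishing the separation $\phi_\eta(\omega) > 0$ from the pointwise identifiability alone: without continuity of $\phi$ on all of $\rdd$, the infimum over $\{\|v\|\geq \eta\}$ could collapse to $0$. The natural workaround uses assumption 2 in tandem with continuity of the $\phi_n$ themselves (which holds in the intended applications, since sample depth functions are typically continuous), so that $\phi$ inherits lower semicontinuity and attains a positive infimum outside any neighborhood of $\mathbf{0}$. An alternative is to apply dominated convergence to $\int_{\|v\|>\eta} e^{-\lambda_n \phi}\pi\, dv$ directly, but then one must take $\rho_0 = \rho_n \downarrow 0$ and control $m(\rho_n)$ relative to the decay of the numerator, which requires additional quantitative assumptions on $\pi$ near $\mathbf{0}$.
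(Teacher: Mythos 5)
Your proposal is correct and follows essentially the same route as the paper's proof: both condition on the high-probability event $\{\norm{\phi_n-\phi}_\infty\leq n^{-s'}\}$ so that the factor $e^{\pm\lambda_n n^{-s'}}\to 1$ (using $s>r$), lower-bound the Gibbs mass near $\mathbf{0}$ via H\"{o}lder continuity and positivity of $\pi$, and upper-bound the mass on $\{\norm{v}\geq\eta\}$ by an exponentially small term; the paper merely packages this through the Portmanteau theorem with a shrinking ball of radius $n^{-\xi}$, $\xi\alpha>r$, instead of your fixed $\omega$-dependent radius $\rho_0(\omega)$, which is a cosmetic difference. The separation $\inf_{\norm{v}\geq\eta}\phi(\omega,v)>0$ that you flag as the main obstacle is treated no more rigorously in the paper—it is simply asserted from assumptions 3 and 4 that $\phi(\omega,v)>k'>0$ off a neighborhood of $\mathbf{0}$ (and, likewise, both arguments implicitly need $r>0$ so that $\lambda_n\to\infty$)—so your argument stands on the same footing as the published one.
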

Lemma \ref{lem::exp_m} may be applied outside the context of depth functions, and can be used to prove weak consistency of an estimator based on the exponential mechanism. 
When applying Lemma \ref{lem::exp_m}, the sequence $\lambda_n$ should be replaced by the ratio of the privacy parameter and the global sensitivity of the cost function. 
This lemma shows that smoother, insensitive cost functions will allow the estimator to be consistent for smaller privacy budgets, provided that the prior is positive in a region around the maximizer. 
Additionally, if $e^{-\lambda_n\phi_n(\omega,v)}$ is integrable, then we can let $\pi(v)=1$ for all $v$ and the result still holds. 
We can apply Lemma \ref{lem::exp_m} to data depth functions, which results in the following theorem. 
\begin{theorem}
Suppose that $\sup_v|\D(v;F_n)-\D(v;F)|=o_p(n^{-s})$ where $s\geq 0$, $\GS(\D)=C(\D)/n$, the maximum of $D(x;F)$ occurs uniquely at $\theta$ and $\D$ is $\alpha$-H\"{o}lder continuous at $\theta$, for some $\alpha>0$. 
Additionally, suppose that $\pi(v)$ is a bounded Lebesgue density which is positive in a neighborhood around $\theta$. 
Let $\rho\coloneqq\D(\theta;F)<\infty$, then for $\widetilde{T}(\sam{X}{n})$ drawn from the density
$$f(v;F_n)=\frac{\exp\left(-\dfrac{\epsilon_n}{2\GS(\D)}(\rho-  \D(v;F))\right)\pi(v)}{\int_{\mathbb{R}^d} \exp\left(-\dfrac{\epsilon_n}{2\GS(\D)}(\rho-\D(v;F_n))\right)\pi(v)dv},$$
it holds that
\begin{enumerate}
    \item $\widetilde{T}(\sam{X}{n})\conp \theta \text{ when } \epsilon_n=O(n^{r'})$, $r'<s-1$.
    \item $ \widetilde{T}(\sam{X}{n})\cond T,$  when $n\epsilon_n\rightarrow K<\infty$, where $T$ is a random vector whose probability density function is proportional to $\exp\left(-\frac{K(\rho-  \D(v;F))}{2C(\D)}\right).$
\end{enumerate}

\label{thm::exp_med}
\end{theorem}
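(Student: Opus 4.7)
The plan is to treat the two parts separately: Part 1 is essentially a direct application of Lemma \ref{lem::exp_m} after translating so that the unique maximizer $\theta$ of $\D(\cdot;F)$ sits at the origin, whereas Part 2 requires a convergence-of-densities argument since the scaling factor $\lambda_n\coloneqq \epsilon_n/(2\GS(\D))=n\epsilon_n/(2C(\D))$ tends to a finite limit and Lemma \ref{lem::exp_m} does not directly apply.

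For Part 1, I rewrite the density in the exponential-mechanism form
$$f(v;F_n)\propto \exp\bigl(-\lambda_n\phi_n(v)\bigr)\pi(v),\qquad \phi_n(v)\coloneqq\rho-\D(v;F_n),$$
and set $\phi(v)\coloneqq\rho-\D(v;F)$. I shift coordinates by $\theta$ so that the translated $\phi$ has its unique zero at $\mathbf{0}$, and then verify the five hypotheses of Lemma \ref{lem::exp_m}: (i) with $\epsilon_n=O(n^{r'})$ we have $\lambda_n=O(n^{1+r'})$, so the lemma's exponent is $r=1+r'$; (ii) uniform convergence yields $\|\phi_n-\phi\|_\infty=o_p(n^{-s})$, and the requirement $s>r$ is exactly the assumption $r'<s-1$; (iii) the $\alpha$-H\"older continuity of $\D$ at $\theta$ transfers to $\phi$ at $\mathbf{0}$; (iv) uniqueness of the maximizer of $\D(\cdot;F)$ gives $\phi(v)=0\iff v=\mathbf{0}$; (v) the shifted prior $v\mapsto\pi(v+\theta)$ inherits boundedness and positivity near $\mathbf{0}$. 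Lemma \ref{lem::exp_m} then delivers the translated convergence, i.e.\ $\widetilde{T}(\sam{X}{n})\conp\theta$.

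For Part 2, note that $\lambda_n=n\epsilon_n/(2C(\D))\to K/(2C(\D))$, a positive constant. Uniform convergence $\sup_v|\D(v;F_n)-\D(v;F)|=o_p(1)$ together with continuity of the exponential yields pointwise-in-probability convergence of the numerator integrand $\exp(-\lambda_n(\rho-\D(v;F_n)))\pi(v)$ to $\exp(-K(\rho-\D(v;F))/(2C(\D)))\pi(v)$. Boundedness of $\D$ in a fixed compact interval furnishes a uniform dominating function of the form $e^{\lambda_n\rho}\pi(v)$ (whose constant factor is asymptotically bounded), so the dominated convergence theorem gives convergence in probability of the denominators to a positive finite limit. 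Dividing, the random densities $f(\cdot;F_n)$ converge pointwise in probability to the limiting density. Applying Scheff\'e's lemma along an almost-sure subsequence, and then passing back via the usual subsequence-in-probability argument, yields convergence in total variation (in probability) of the conditional law of $\widetilde{T}(\sam{X}{n})$ given $F_n$ to the law of $T$; taking expectation over $F_n$ and invoking dominated convergence gives the unconditional convergence $\widetilde{T}(\sam{X}{n})\cond T$.

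The main obstacle is Part 2, specifically moving from pointwise-in-probability convergence of the random densities to convergence in distribution of the unconditional law of $\widetilde{T}(\sam{X}{n})$. This requires a careful combination of a subsequence argument, Scheff\'e's lemma, and Fubini/bounded-convergence steps so that the randomness of $F_n$ and the noise of the exponential mechanism are handled simultaneously; verifying that the normalizing constant stays bounded away from zero (so that the ratio is well behaved uniformly in $n$) is the most delicate step.
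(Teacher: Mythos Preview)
Your Part 1 is exactly the paper's argument: set $\phi_n(v)=\rho-\D(v;F_n)$, $\phi(v)=\rho-\D(v;F)$, identify $\lambda_n=n\epsilon_n/(2C(\D))=O(n^{1+r'})$ so that $r=1+r'<s$, and invoke Lemma~\ref{lem::exp_m} after the translation by $\theta$.

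Your Part 2 is correct but organized differently from the paper. The paper does not argue from scratch: it states and proves (in the proofs section) a companion result, Lemma~2, which is Lemma~\ref{lem::exp_m} with the single change $\lambda_n\to K<\infty$, and whose conclusion is $Q_n\Rightarrow Q$ with $Q(A)\propto\int_A e^{-K\phi}\pi$. That lemma is proved with the same $B_n=\{\omega:\|\phi_n-\phi\|_\infty<n^{-s}\}$ conditioning trick used in Lemma~\ref{lem::exp_m}: on $B_n$ the multiplicative error $e^{-\lambda_n(\phi_n-\phi)}\to 1$ uniformly in $v$, so dominated convergence gives $\int_A e^{-\lambda_n\phi_n}\pi\to\int_A e^{-K\phi}\pi$ for each $A$, and one then integrates over $\Omega$ by a second application of dominated convergence. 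Your route---pointwise-in-probability convergence of the densities, a uniform $\pi$-dominated bound from boundedness of $\D$, Scheff\'e along almost-sure subsequences, and the subsequence characterization of convergence in probability---yields the stronger conclusion of total-variation convergence in probability of the conditional laws, from which weak convergence follows after averaging. Both arguments hinge on the same two ingredients (bounded $\lambda_n$ and an integrable envelope $C\pi$); the paper's $B_n$ device is a bit shorter because it converts the $o_p$ hypothesis into a deterministic bound on a high-probability event and works with $Q_n^\omega(A)$ for fixed $A$ rather than passing through densities and Scheff\'e.
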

\begin{remark}
The continuity condition is weak, in the sense that $\alpha$ can be very small. 
Halfspace depth is $\alpha$-H\"{o}lder continuous if $F_u$ are $\alpha$-H\"{o}lder continuous and $F$ is continuous. 
$\IRW$ depth is $\alpha$-H\"{o}lder continuous if $F_u$ are $\alpha$-H\"{o}lder continuous.
For simplicial depth, we only need $F$ to be $\alpha$-H\"{o}lder continuous. 
\label{rem::continuity}
\end{remark}
\begin{remark}
For many depth functions, we can choose $s$ arbitrarily close to 1/2 and the convergence requirement is still satisfied. 
Therefore choices of $r'$ such that $r'$ is close to -1/2 give the fastest rates at which the privacy parameter can decrease to 0 while maintaining consistency of the estimator. 
\end{remark}
Theorem \ref{thm::exp_med} can then be applied to the three depths of \citep{Tukey1974,Liu1988,RAMSAY201951}. 
These three depth functions all satisfy the uniform consistency requirement for $s<1/2$ \citep{DUMBGEN1992119, masse2004, RAMSAY201951} and continuity was discussed in Remark \ref{rem::continuity}. 
The assumption on uniqueness of the median is trickier, in the sense that these depth-based medians are not necessarily unique. 
We only need the population depth-based median to be unique, which is satisfied for distributions which are symmetric about a unique point. 
This holds because these depth functions satisfy the maximality at center property \citep[see, e.g.,][]{Zuo2000}. 
Algorithms that implement Mechanism \ref{mech::exp_med} are an interesting line of new research; we cannot directly use, say Markov Chain Monte Carlo methods, without first ensuring that they maintain the privacy of the estimators. 

For projection depth, we cannot use the exponential mechanism without injecting a significant level of noise into the estimator and so we instead extend the propose-test-release framework \citep{Brunel2020} to be used with the exponential mechanism. 
Suppose $\phi_{\sam{X}{n}}\colon \rdd\rightarrow\re^+$ is some cost function which we would like to minimize. 
Then, define 
$$A_\eta(\phi_{\sam{X}{n}};\sam{X}{n})=\min \left\{k\in \mathbb{N}\colon\  \sup_{\sam{Y}{n}\in\mathcal{D}(\sam{X}{n};k)}\sup_x |\phi_{\sam{X}{n}}(x)-\phi_{\sam{Y}{n}}(x)|>\eta\right\},$$
as the truncated breakdown point of the cost function. 
This is a direct extension of the truncated breakdown point of \citep{Brunel2020} to the functional context; the norm in \eqref{eqn::tbdp} is replaced with a norm on the function space. 
The following mechanism extends PTR to be used with the exponential mechanism:
\begin{mech}
Suppose that $$ Q_{\sam{X}{n}}(A)=\int_A\frac{\exp(-{\phi_{\sam{X}{n}}(v)\frac{\epsilon }{ 2\eta}})dv}{\int_{\rdd}\exp(-{\phi_{\sam{X}{n}}(v)\frac{\epsilon }{ 2\eta}})dv}$$
is a valid measure on $(\rdd,\mathscr{B}(\rdd)).$ Let $a_\delta,\ b_\delta$ and $V$ be as in Mechanism \ref{mech::PTR}. 
Then the estimator 
\begin{equation*}
 \widetilde{T}(\sam{X}{n})=\left\{  \begin{array}{ll}
   \perp  & \text{if } A_\eta(\phi_{\sam{X}{n}};\sam{X}{n})+\frac{a_\delta}{\epsilon} V\leq 1+\frac{b_\delta}{\epsilon} \\
   \widehat{T}(\sam{X}{n})\sim Q_{\sam{X}{n}}& o.w.
\end{array} \right. ,
\end{equation*}
is a differentially private estimate of $\argmin_v \phi_{\sam{X}{n}}(v)$. Under the Laplace version, the estimator is $(2\epsilon,\delta)$-differentially private and under the Gaussian version, the estimator is $(2\epsilon,2\delta)$-differentially private.
\label{mech::fun_ptr}
\end{mech}
\noindent This mechanism shows that we can still use propose-test-release when the cost function is likely to have low local sensitivity. 
In fact, the the Gaussian version uses slightly less of the privacy budget than that of the original propose-test-release mechanism, which is due to the pure differential privacy of the exponential mechanism. 
The following theorem can be used to prove consistency of an estimator generated via Mechanism \ref{mech::fun_ptr}. 
\begin{theorem}
Suppose that the sequence of random functions $\phi_{\sam{X}{n}}(v)\colon \mathbb{R}^d\rightarrow\mathbb{R}^+$ satisfy the conditions of Lemma \ref{lem::exp_m} and that 
$$\int_{\rdd}\exp\left(-\frac{K  \phi(v)}{2}\right)dv<\infty,$$
for any $K>0$. 
Suppose that $\widetilde{T}(\sam{X}{n})$ is generated according to Mechanism \ref{mech::fun_ptr}. 
If the sequences $\epsilon_n,\delta_n, \eta_n$ imply that $$\Pr\left(A_{\eta_n}(\phi_{\sam{X}{n}};\sam{X}{n})+V \frac{a_{\delta_n}}{\epsilon_n}\leq \frac{b_{\delta_n}}{\epsilon_n}+1\right)\rightarrow 0,$$ as $n\rightarrow\infty$ then it holds that
\begin{enumerate}
    \item $\widetilde{T}(\sam{X}{n})\conp \argmin_{v\in\rdd}\phi(v)\text{ when } \epsilon_n/\eta_n=O(n^{r}),\ r<1/2$.
    \item $\widetilde{T}(\sam{X}{n})\cond T,$  when $\epsilon_n/\eta_n \rightarrow K<\infty$, where $T$ is a random vector whose probability density function is proportional to $\exp\left(-\frac{K  \phi(v)}{2}\right).$
\end{enumerate}
\label{thm::PTR_EM}
\end{theorem}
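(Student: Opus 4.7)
The plan is to first reduce to the non-$\perp$ outcome of the PTR test, then apply Lemma \ref{lem::exp_m} for part (i) and a Scheff\'e-type argument for part (ii). Denote by $E_n$ the event
$$\{A_{\eta_n}(\phi_{\sam{X}{n}};\sam{X}{n}) + V\, a_{\delta_n}/\epsilon_n > 1 + b_{\delta_n}/\epsilon_n\},$$
on which $\widetilde{T}(\sam{X}{n}) = \widehat{T}(\sam{X}{n}) \sim Q_{\sam{X}{n}}$. The standing hypothesis gives $\Pr(E_n^c) \to 0$, so for every Borel set $A$,
$$\Pr(\widetilde{T}(\sam{X}{n}) \in A) = \Pr(\widehat{T}(\sam{X}{n}) \in A) + o(1),$$
and both claims reduce to the corresponding statements for $\widehat{T}(\sam{X}{n}) \sim Q_{\sam{X}{n}}$.

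For part (i), the density of $\widehat{T}(\sam{X}{n})$ is proportional to $\exp(-\phi_{\sam{X}{n}}(v)\,\epsilon_n/(2\eta_n))$, which matches the setup of Lemma \ref{lem::exp_m} with $\lambda_n = \epsilon_n/(2\eta_n) = O(n^r)$ and $\pi(v) \equiv 1$. The integrability hypothesis of Theorem \ref{thm::PTR_EM} validates the choice $\pi \equiv 1$, per the discussion following Lemma \ref{lem::exp_m}. The rate $\|\phi_{\sam{X}{n}} - \phi\|_\infty = o_p(n^{-s})$ for some $s > r$ is inherited from the assumption that $\phi_{\sam{X}{n}}$ satisfies the conditions of Lemma \ref{lem::exp_m}. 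After a shift in $v$ so that $\argmin_v \phi(v) = \mathbf{0}$ (which leaves the density invariant), Lemma \ref{lem::exp_m} delivers $\widehat{T}(\sam{X}{n}) \conp \argmin_v \phi(v)$.

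For part (ii), set $K_n = \epsilon_n/(2\eta_n) \to K/2$ and write the random density as
$$f_n(v) = \dfrac{\exp(-K_n\, \phi_{\sam{X}{n}}(v))}{Z_n}, \qquad Z_n = \int_{\rdd} \exp(-K_n\, \phi_{\sam{X}{n}}(v))\, dv,$$
with target $f(v) \propto \exp(-(K/2)\phi(v))$. Pass to a subsequence along which $\|\phi_{\sam{X}{n}} - \phi\|_\infty \to 0$ almost surely, so that $K_n \phi_{\sam{X}{n}}(v) \to (K/2)\phi(v)$ pointwise for a.e.\ data realization. For $n$ large on this event, $\phi_{\sam{X}{n}} \geq \phi - 1$ and $K_n \in [K/4,K]$, hence
$$\exp(-K_n\, \phi_{\sam{X}{n}}(v)) \leq e^{K}\, \exp(-(K/4)\phi(v)),$$
where the right side is integrable in $v$ by hypothesis. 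Dominated convergence then yields $Z_n \to Z$ and $\int g\, f_n \to \int g\, f$ for every bounded continuous $g$ almost surely along the subsequence. A further dominated convergence over the data randomness and the subsequence principle then deliver $\widehat{T}(\sam{X}{n}) \cond T$.

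The main obstacle is pinning down a single deterministic integrable dominating function valid on the random event where $\phi_{\sam{X}{n}}$ is uniformly close to $\phi$; once the bound $e^K \exp(-(K/4)\phi)$ is in hand, Scheff\'e and dominated convergence carry the remaining work, including convergence of the normalising constants $Z_n$. Minor bookkeeping items are the $o_p$-to-a.s.\ upgrade via subsequences, and unconditioning over the data to combine with the $o(1)$ contribution from $E_n^c$.
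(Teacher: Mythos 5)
Your proposal is correct and follows essentially the same route as the paper: reduce to the non-$\perp$ branch using the assumed vanishing probability of the test failing, then invoke Lemma \ref{lem::exp_m} with $\pi\equiv 1$ and $\lambda_n=\epsilon_n/(2\eta_n)$ for part (i), and a dominated-convergence/Scheff\'e argument---which is precisely the content of Lemma \ref{lem::exp_m2}---for part (ii). Your explicit dominating function $e^{K}\exp\left(-K\phi(v)/4\right)$ merely spells out the step the paper compresses into the remark that the assumed integrability of $\exp\left(-K\phi(v)/2\right)$ remedies the absence of a prior.
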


We can now substitute in the outlyingness function and see how this algorithm works for the purposes of  privately estimating the projection depth median. 
A first question is whether or not the following probability density function
\begin{align*}
f(v)=\cfrac{\exp\left(-\dfrac{\epsilon  \sup_u|v^\top u-\med(\sam{X}{n}^\top u)|/\varsigma(\sam{X}{n}^\top u)}{2\eta}\right)}{\int_{\mathbb{R}^d}\exp\left(-\dfrac{\epsilon  \sup_u|v^\top u-\med(\sam{X}{n}^\top u)|/\varsigma(\sam{X}{n}^\top u)}{2\eta}\right)dv}
\end{align*}
even exists. 
Recall from \citep{zuo2003} that $$ \sup_u\frac{|v^\top u-\med(\sam{X}{n}^\top u)|}{\varsigma(\sam{X}{n}^\top u)}\geq \frac{\norm{v}-\sup_u\med(\sam{X}{n}^\top u)}{\sup_u \varsigma(\sam{X}{n}^\top u)}.$$
It follows that if $\sup_u \med(\sam{X}{n}^\top u)<\infty$, then
\begin{align*}
\int_{\mathbb{R}^d}\exp\left(-\frac{\epsilon  \sup_u|v^\top u-\med(\sam{X}{n}^\top u)|/\varsigma(\sam{X}{n}^\top u)}{2\eta}\right)dv &\leq C_1\int_{\mathbb{R}^d}\exp\left(-\frac{C_2  \norm{v}}{2}\right)dv<C_1\int_{\mathbb{R}^d}\exp\left(-\frac{C_3  \norm{v}_1}{2}\right)dv<\infty,
\end{align*}
where the second inequality follows from equivalency of norms and the last inequality follows from the fact that $\exp\left(-C_3  \norm{v}_1/2\right)$ is proportional to a Laplace density function. 
Unfortunately, immediately using PTR with the exponential mechanism gives no gains in estimating the projection median over using the global sensitivity of projection depth (which is 1). 
If the points in $\sam{X}{n}$ are distinct, we have that $A_{\eta}(O_\ell(\cdot;\sam{X}{n});\sam{X}{n})=1$ for any $\eta$. 
To see this, suppose that $\sam{Y}{n}$ is a neighboring dataset, with $X_1$ changed to be some observation such that $\varsigma(\sam{Y}{n}^\top u)\neq \varsigma(\sam{X}{n}^\top u)$. 
It follows that for any $u$
$$\sup_x \left|O^u_\ell(x;\sam{X}{n})-O^u_\ell(x;\sam{Y}{n})\right|\approx \sup_x \left|x^\top u\frac{\varsigma(\sam{X}{n}^\top u)-\varsigma(\sam{Y}{n}^\top u)}{\varsigma(\sam{X}{n}^\top u)\varsigma(\sam{Y}{n}^\top u)}\right|=\infty.$$
In order to estimate the projection depth-based median privately, we may truncate the outlyingness function $O$ in the following manner
$$\mathcal{O}_\ell(x;\sam{X}{n};M_n)=\left\{\begin{array}{cc}
   O_\ell(x;\sam{X}{n})  & \norm{x}<M_n \\
    \infty  & \norm{x}\geq M_n
\end{array}\right. .$$
We can now apply Mechanism \ref{mech::fun_ptr} and Theorem \ref{thm::PTR_EM} to $\mathcal{O}$ in order to privately estimate the projection depth-based median. 
The following theorem gives reasonable choices of $\eta$ and $\epsilon$ that maintain consistency of the estimator. 
\begin{theorem}
Suppose that $0< M_n=o(n^{1/2})$, $\sup_{u}|\med(F_u)|<\infty$, $\sup_{u}\iqr(F_u)<\infty$, $\inf_{u}\iqr(F_u)>0$ and the conditions of Theorem \ref{thm::pd_con} hold. 
Further, suppose that $\PD(x;F)$ is uniquely maximized at $\theta.$ 
Let $\widetilde{T}(\sam{X}{n})$ be the estimator of Mechanism \ref{mech::fun_ptr} with cost function $\phi_{\sam{X}{n}}(v)=\mathcal{O}_2(v;\sam{X}{n};M_n)$, then
\begin{enumerate}
    \item $\widetilde{T}(\sam{X}{n})\conp \theta$ when $\epsilon_n/\eta_n=O(n^{r}),\ r<1/2$. 
    \item $\widetilde{T}(\sam{X}{n})$ converges in distribution to a random vector whose distribution is defined by the measure
$$Q(A)=\int_A\frac{\exp(-(O_2(v;F)-O_2(\theta;F))\frac{K}{ 2})dv}{\int_{\rdd}\exp(-(O_2(v;F)-O_2(\theta;F)){\frac{K }{ 2}})dv}$$ when $\epsilon_n/\eta_n\rightarrow K,\ K<\infty$. 
\end{enumerate}
\label{thm::c_PDM}
\end{theorem}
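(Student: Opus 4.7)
The plan is to derive both conclusions from Theorem \ref{thm::PTR_EM} applied with the cost $\phi_{\sam{X}{n}}(v) = \mathcal{O}_2(v;\sam{X}{n};M_n)$. Because the exponential mechanism is invariant under adding a constant to the cost, I identify the limit used in Lemma \ref{lem::exp_m} with $\phi(v) = O_2(v;F) - O_2(\theta;F)$, which is nonnegative and, by the uniqueness hypothesis on $\argmax_v \PD(v;F)$, vanishes only at $v = \theta$. Integrability of $\exp(-K\phi(v)/2)$ against Lebesgue measure, a standing hypothesis of Theorem \ref{thm::PTR_EM}, follows by repeating the lower bound on $\sup_u |v^\top u - \med(F_u)|/\iqr(F_u)$ written just before this theorem, now invoking the population assumptions $\sup_u|\med(F_u)| < \infty$ and $\inf_u \iqr(F_u) > 0$.

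First I would verify the hypotheses of Lemma \ref{lem::exp_m} for this pair $\phi_{\sam{X}{n}}, \phi$. The scaling $\lambda_n = \epsilon_n/(2\eta_n) = O(n^r)$ with $r < 1/2$ is directly assumed. H\"{o}lder continuity of $O_2(\cdot;F)$ at $\theta$ follows from the quartile-smoothness of $F_u$ inherited from Theorem \ref{thm::pd_con}, together with the uniform bounds on $\med(F_u)$ and $\iqr(F_u)$, and the elementary fact that a supremum of $\alpha$-H\"{o}lder maps is $\alpha$-H\"{o}lder. Uniform closeness of $\mathcal{O}_2(v;F_n;M_n)$ to $O_2(v;F)$ on $\{\|v\| \leq M_n\}$ reduces to the uniform-in-$u$ rates of convergence of $\med(F_{n,u})$ and $\iqr(F_{n,u})$ already established in the proof of Theorem \ref{thm::pd_con}; outside that ball the posterior mass is negligible by the integrability bound, and $M_n = o(n^{1/2})$ is exactly what keeps the resulting approximation rate $o_p(n^{-s})$ with $s > r$.

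Second, I would verify the PTR release condition $\Pr(A_{\eta_n}(\phi_{\sam{X}{n}};\sam{X}{n}) + V a_{\delta_n}/\epsilon_n \leq 1 + b_{\delta_n}/\epsilon_n) \to 0$. The truncation is crucial here: for $v$ with $\|v\| \leq M_n$, swapping $k$ observations perturbs $\med(\sam{X}{n}^\top u)$ and $\iqr(\sam{X}{n}^\top u)$ by an $O(k/n)$ amount uniformly in $u$, by the same quartile-smoothness step used inside Theorem \ref{thm::pd_con}, so $|\mathcal{O}_2(v;\sam{Y}{n};M_n) - \mathcal{O}_2(v;\sam{X}{n};M_n)| \lesssim M_n k/n$ uniformly in $v$. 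Equating this to $\eta_n$ gives $A_{\eta_n} \gtrsim n\eta_n/M_n$ with high probability, which under the assumed rates dominates $1 + b_{\delta_n}/\epsilon_n$. Once all three conditions are in place, parts 1 and 2 of Theorem \ref{thm::PTR_EM} deliver parts 1 and 2 of the present theorem.

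The main obstacle is reconciling the truncation at $M_n$ with the supremum over $u \in S^{d-1}$: $\mathcal{O}_2$ equals $+\infty$ outside the truncation ball while $O_2(v;F)$ is merely large, so the uniform convergence, H\"{o}lder continuity, and breakdown-point estimates must all be carried out on the truncated domain, with the tails handled separately via the integrability lower bound. The factor $M_n$ appearing in the breakdown estimate is precisely what forces the assumption $M_n = o(n^{1/2})$ rather than just $M_n \to \infty$, and getting this dependence tight is the most delicate part of the argument.
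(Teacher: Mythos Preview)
Your proposal is correct and follows essentially the same route as the paper: reduce to Theorem \ref{thm::PTR_EM}, check the hypotheses of Lemma \ref{lem::exp_m} for $\phi=O_2(\cdot;F)-O_2(\theta;F)$ using the boundedness conditions on $\med(F_u)$ and $\iqr(F_u)$ (the paper cites \cite{zuo2003}), verify integrability of the limiting density via the same lower bound on $O_2$, and then show the release probability vanishes by splitting into $\norm{v}\le M_n$ (handled by the perturbation argument from the proof of Theorem \ref{thm::pd_con}) and $\norm{v}>M_n$ (where both truncated costs are $+\infty$, so the difference is zero). Your articulation of why $M_n=o(n^{1/2})$ is needed---it multiplies the per-observation quantile perturbation in the numerator of $O_2$, so the uniform-in-$v$ breakdown estimate picks up an $M_n$ factor---is in fact more explicit than the paper's one-line remark that $M_n$ ``allows us to bound the remainder in the Taylor series expansion.''
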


The obvious issue is choosing $M_n$ in practice, which can be partially informed by the above theorem. 
Clearly, if the data is known to be bounded it is easy to choose $M_n$. 
If the data are not bounded one can choose $M_n$ independent of the data, given domain knowledge. 
It is important that the choice of $M_n$ does not depend on the data, which would violate the consistency theorem; if $M_n$ is chosen based on the data, then $M_n$ could differ between two datasets, implying that  $O^u_\ell(x;\sam{X}{n})-O^u_\ell(x;\sam{Y}{n})=\infty$ for some $x$ and consequentially the truncated breakdown point of the outlyingness function is 1. 
Computationally, again the difficulty lies in computing $A_\eta(\phi_{\sam{X}{n}};\sam{X}{n})$, for which we can use similar methods as discussed in the previous section. 
\section{Concluding Remarks}
We have introduced several mechanisms for differentially private, depth-based inference. 
These mechanisms include private estimates of point-wise depth values for population depth functions, such as halfspace and projection depth. 
Such mechanisms have been shown to output consistent estimators, even for cases where the privacy budget is small, i.e., $\epsilon_n\rightarrow 0$. 
Notable is that we have shown that one can get consistent estimates of projection depth values, even though it has high global sensitivity. 
We have also introduced algorithms for estimating popular depth-based medians, including the simplicial, halfspace, IRW and projection medians. 
These algorithms all provide differentially private, consistent estimators of the population median under some very mild conditions. 
Here, the privacy budget is also permitted to decrease to 0, provided it is not too fast, e.g., $\epsilon_n\propto n^{-r},\ r<1/2$ for the halfspace median. 
We further provide some general tools for constructing and studying differentially private estimators. 
We provide a lemma for showing weak consistency of differentially private estimators based on the exponential mechanism, even if the objective function is not differentiable. 
We also extend the propose-test-release algorithm of \cite{Brunel2020} to be used with the exponential mechanism, which allows one to privately estimate maximizers of objective functions which have infinite global sensitivity. 
We also provide tools to show weak consistency of an estimator based on our extension of the PTR algorithm. 
We apply the extended PTR algorithm and the related consistency result to the projection depth-based median. 

The mechanisms introduced in this paper can be used to perform different types of private inference via the depth-based inference framework. 
One benefit is that these inference procedures will retain their robustness; robustness may be even more useful in the private setting, where the analyst may have only limited access to the database and therefore cannot determine if the data contains outliers. 
Furthermore, this work has shown another meaningful connection between robust statistics and differential privacy. 
As such, it has opened up many avenues for further research, which the authors are exploring. 
Some of these include how these algorithms perform in different inferential contexts, such as depth-based hypothesis testing and clustering. 
Another area of interest is the computation of the medians presented in Section \ref{sec::pm}. 
Particularly the computation of the truncated breakdown point, which is more difficult than in the one-dimensional setting.

\bibliographystyle{apalike}
\bibliography{main}
\section{Proofs}
\begin{proof}[Proof of Theorem \ref{thm::gshs}]
The first property follows directly from consistency of the sample depths and the fact that $\GS(D)\rightarrow 0$. 
The second case is true for the same reasons.
\end{proof}
\begin{proof}[Proof of Theorem \ref{thm::pd_con}]
We begin with the Laplace case. 
In order to show that $\widetilde{\PD}_2(x;F_n)\conp \PD(x;F;\med,\iqr)$,
we must have that $\frac{\eta}{\epsilon_n}W_2\rightarrow 0$, $\PD(x;F_n;\med,\iqr)\conp \PD(x;F;\med,\iqr)$ and 
\begin{equation}
\label{eqn::step1}
    \Pr\left( A_{\eta}\left(O_{2}\left(x ; F_{n}\right) ; \sam{X}{n}\right)  \leq 1+\frac{\log(2/\delta_n)-W_1}{\epsilon_n}\right)\rightarrow 0,
\end{equation}
as $n\rightarrow\infty.$ 
The first two properties hold from the assumptions and the properties of projection depth. 
It remains to show \eqref{eqn::step1}. 
To this end, note that $$\Pr(|W_1|>\log(2/\delta_n))=2e^{-\log(2/\delta_n)}=\delta_n=O(n^{-k}),$$
from the properties of the Laplace distribution and the rate of convergence of $\delta_n$. 
We can then write
{\small
\begin{align}
    \Pr\left( A_{\eta}\left(O_{2}\left(x ; F_{n}\right) ; \sam{X}{n}\right)  \leq 1+\frac{\log(2/\delta_n)-W_1}{\epsilon_n}\right)&\leq \Pr\left( A_{\eta}\left(O_{2}\left(x ; F_{n}\right) ; \sam{X}{n}\right)  \leq 1+\frac{\log(2/\delta_n)-W_1}{\epsilon_n},W_1>-\log(2/\delta_n)\right)\nonumber\\
    &\qquad +\Pr\left( A_{\eta}\left(O_{2}\left(x ; F_{n}\right) ; \sam{X}{n}\right)  \leq 1+\frac{\log(2/\delta_n)-W_1}{\epsilon_n},W_1<-\log(2/\delta_n)\right)\nonumber\\
    \label{eqn::step11}
    &\leq \Pr\left( A_{\eta}\left(O_{2}\left(x ; F_{n}\right) ; \sam{X}{n}\right)  \leq 1+2\frac{\log(2/\delta_n)}{\epsilon_n}\right)+O(n^{-k}).
\end{align}}
Now, let $\rho_n=2\frac{\log(2/\delta_n)}{\epsilon_n}$ and we want to show that 
\begin{equation}
\label{eqn::step12}
\Pr\left(A_{\eta}(O_{2}\left(x ; F_{n}\right);\sam{X}{n}) \leq 1+\rho_n \right)\rightarrow 0,
\end{equation}
which, together with \eqref{eqn::step11} implies \eqref{eqn::step1}. 
To this end, it holds that
\begin{align}
\Pr\left(A_{\eta}(O_{2}\left(x ; F_{n}\right);\sam{X}{n}) \leq 1+\rho_n \right)&=\Pr\left(\bigcup_{j=1}^{1+\floor{\rho_n}}\left\{\sup_{\sam{Y}{n}\in \mathcal{D}(\sam{X}{n},j)}|O_{2}(x;\sam{X}{n})-O_{2}(x;\sam{Y}{n})|\geq \eta\right\} \right)\nonumber\\
\label{eqn::nested}
&=\Pr\left(\sup_{\sam{Y}{n}\in \mathcal{D}(\sam{X}{n},1+\floor{\rho_n})}|O_{2}(x;\sam{X}{n})-O_{2}(x;\sam{Y}{n})|\geq \eta \right),    
\end{align}
where the last line follows from the fact that $$\left\{\sup_{\sam{Y}{n}\in \mathcal{D}(\sam{X}{n},j+1)}|O_{2}(x;\sam{X}{n})-O_{2}(x;\sam{Y}{n})|\geq \eta\right\} \subset \left\{\sup_{\sam{Y}{n}\in \mathcal{D}(\sam{X}{n},j)}|O_{2}(x;\sam{X}{n})-O_{2}(x;\sam{Y}{n})|\geq \eta\right\},$$
for $j\in\{1,\ldots,1+\floor{\rho_n}\}$. 
We now must only show that
\begin{equation}
    \Pr\left(\sup_{\sam{Y}{n}\in \mathcal{D}(\sam{X}{n},1+\floor{\rho_n})}|O_{2}(x;\sam{X}{n})-O_{2}(x;\sam{Y}{n})|\geq \eta \right)\rightarrow 0,
    \label{eqn::step3}
\end{equation} 
as $n\rightarrow\infty$. 
Combining this with \eqref{eqn::nested} and \eqref{eqn::step12} implies that \eqref{eqn::step1} holds from the previous argument. 
Consider the Taylor series expansion of $f(x,y)=x/y$ about the point $(x_0,y_0)=(|x^\top u-\med\left(F_u\right)|,\iqr(F_u))$:
\begin{align*}
O^u_2(x;\sam{X}{n})&=\frac{|x^\top u-\med\left(F_u\right)|}{\iqr(F_u)}+\frac{|x^\top u-\med\left(\sam{X}{n}^\top u\right)|-|x^\top u-\med\left(F_u\right)|}{\iqr(F_u)}\\
&\qquad\qquad\qquad\qquad\qquad\qquad -\frac{|x^\top u-\med\left(F_u\right)|}{\iqr(F_u)^2}(\iqr(\sam{X}{n}^\top u)-\iqr(F_u))+\mathcal{R}_{n,u}\\
&=\frac{|x^\top u-\med\left(\sam{X}{n}^\top u\right)|}{\iqr(F_u)}-\frac{|x^\top u-\med\left(F_u\right)|}{\iqr(F_u)^2}(\iqr(\sam{X}{n}^\top u)-\iqr(F_u))+O_p(n^{-1}).
\end{align*}
It is easy to see that $\mathcal{R}_{n,u}=O_p(n^{-1})$, since $(\iqr(\sam{X}{n}^\top u)-\iqr(F_u))^2=O_p(n^{-1})$, which holds for our assumptions on the quantiles of $F_u$. 
We can then write 
\begin{align*}
|O^u_2(x;\sam{X}{n})-O^u_2(x;\sam{Y}{n})|&=\left|\frac{|x^\top u-\med\left(u^\top \sam{X}{n}\right)|}{\iqr(\sam{X}{n}^\top u)}-\frac{|x^\top u-\med\left(u^\top \sam{Y}{n}\right)|}{\iqr(\sam{Y}{n}^\top u)}\right|\\
&=\Bigg|\frac{|x^\top u-\med\left(\sam{X}{n}^\top u\right)|-|x^\top u-\med\left(\sam{Y}{n}^\top u\right)|}{\iqr(F_u)}\\
&\qquad\qquad\qquad\qquad\qquad\qquad-\frac{|x^\top u -\med\left(F_u\right)|}{\iqr(F_u)^2}(\iqr(\sam{X}{n}^\top u)-\iqr(\sam{Y}{n}^\top u))+O_p(n^{-1})\Bigg|\\
&\leq \Bigg|\frac{|x^\top u-\med\left(\sam{X}{n}^\top u\right)|-|x^\top u-\med\left(\sam{Y}{n}^\top u\right)|}{\iqr(F_u)}\Bigg|\\
&\qquad\qquad\qquad\qquad\qquad\qquad+\Bigg|\frac{|x^\top u -\med\left(F_u\right)|}{\iqr(F_u)^2}(\iqr(\sam{X}{n}^\top u)-\iqr(\sam{Y}{n}^\top u))\Bigg|+O_p(n^{-1})\\
&\leq \frac{|\med\left(\sam{Y}{n}^\top u\right)-\med\left(\sam{X}{n}^\top u\right)|}{\iqr(F_u)}+\Bigg|\frac{|x^\top u -\med\left(F_u\right)|}{\iqr(F_u)^2}(\iqr(\sam{X}{n}^\top u)-\iqr(\sam{Y}{n}^\top u))\Bigg|+O_p(n^{-1}),
\end{align*}
where the last line follows from the reverse triangle inequality. 
Now, recall that $\sam{Y}{n}$ differs from $\sam{X}{n}$ by at most $1+\floor{\rho_n}$ points. 
If $F_{n,u}$ is the empirical distribution corresponding to $\sam{X}{n}^\top u$ and $G_{n,u}(x)=1-F_{n,u}(x)$ , it holds that
\begin{align*}
|\med\left(\sam{Y}{n}^\top u\right)-\med\left(\sam{X}{n}^\top u\right)|&\leq \max\left\{|F_{n,u}^{-1}(1/2)-F_{n,u}^{-1}(1/2+(\rho_n+1)/n)|,|F_{n,u}^{-1}(1/2)-F_{n,u}^{-1}(1/2-(\rho_n+1)/n)|\right\}\\
&\overset{def.}{=}|F_{n,u}^{-1}(1/2)- F_{n,u}^{-1}(1/2\pm(\rho_n+1)/n)| \\
&=|F_{u}^{-1}(1/2)-F_{n,u}^{-1}(1/2) + F_{n,u}^{-1}(1/2\pm(\rho_n+1)/n)-F_{u}^{-1}(1/2)| \\
&=|G_{n,u}(\med(F_u))+\mathcal{R}''_{n,u}-1/2-G_{n,u}(\med(F_u))-\mathcal{R}'_{n,u}+1/2| \\
&=O(n^{-3/4}\log n)\ a.s.\ .
\end{align*}
where the second last line and the last line follow from a Bahadur type representation of quantiles, as long as $\frac{1}{n}\left(1+2\frac{\log(2/\delta_n)}{\epsilon_n}\right)=O((\log\log n /n)^{3/4})$ \citep[see Theorem 2 on page 2 of][]{DeHaan1979}. 
We know that $\frac{1}{n}\left(1+2\frac{\log(2/\delta_n)}{\epsilon_n}\right)=O((\log\log n /n)^{3/4})$ holds from the assumptions on $\epsilon_n$ and $\delta_n$. 
We can show something similar for the inter-quartile range by simply replacing 1/2 with 1/4 and 3/4. 
Now, we must show that 
\begin{align*}
    \left|\sup_u O^u_2(x;\sam{X}{n})- \sup_u O^u_2(x;\sam{Y}{n})\right|\rightarrow 0\ a.s.\ .
\end{align*}
We see that
\begin{align*}
    \left|\sup_u O^u_2(x;\sam{X}{n})- \sup_u O^u_2(x;\sam{Y}{n})\right|
    &\leq 2\sup_u \left|O^u_2(x;\sam{X}{n})- O_2^u(x;\sam{Y}{n})\right|=O(n^{-3/4}\log n)\ a.s.\ ,
\end{align*}
where the last line follows from the fact that $\xi_{1/4,u},\ \xi_{3/4,u}$ are bounded as functions of $u$, implying that $\mathcal{R}'_{n,u},\mathcal{R}'_{n,u}$ are also bounded in $u$. 
See the proof of Theorem 2' of \citep{DeHaan1979} for the exact expression of $\mathcal{R}'_{n,u},\mathcal{R}''_{n,u}$. 
This implies that for $\eta \propto\frac{\log n}{n^{3/4-r}}$, \eqref{eqn::step3} holds, which immediately gives that  $\widetilde{\PD}_2(x;F_n)\conp \PD(x;F;\med,\iqr)$ as $n\rightarrow\infty$ from the argument preceding \eqref{eqn::step3}. 

For the Gaussian case, we have that  $$\Pr(|Z_1|\sqrt{2\log(1.25/\delta_n)}|>2\log(1.25/\delta_n))\leq2e^{-\log(1.25/\delta_n)}=2(\delta_n/1.25)=O(n^{-k}),$$
from the properties of the normal distribution and the rate of convergence of $\delta_n$. 
We can then write, using the same argument as above, that
{\footnotesize
\begin{align*}
    \Pr\left( A_{\eta}\left(O_{2}\left(x ; F_{n}\right) ; \sam{X}{n}\right)  \leq 1+\frac{2\log(1.25/\delta_n)-Z_1\sqrt{2\log(1.25/\delta_n)}}{\epsilon_n}\right)
    &\leq \Pr\left( A_{\eta}\left(O_{2}\left(x ; F_{n}\right) ; \sam{X}{n}\right)  \leq 1+4\frac{\log(1.25/\delta_n)}{\epsilon_n}\right)\\
    &\qquad\qquad\qquad\qquad+O(n^{-k}).
\end{align*}}
This expression is of the same form as that of \eqref{eqn::step11} in the Laplace case, and the same arguments apply. 
\end{proof}
\begin{proof}[Proof of Lemma \ref{lem::exp_m}]
It is clear that $$\dfrac{e^{-\lambda_n\phi_n(\omega,v)}\pi(v)dv}{\int_{\mathbb{R}^d}  e^{-\lambda_n\phi_n(\omega,v)}\pi(v)dv}$$ is a valid density, since $e^{-x}$ is bounded for all $x\in \re^+$. 
The goal is to show that $Q_n$ converges weakly to $\indtw_\mathbf{0}(\cdot)$, since this is equivalent to $V_n\conp\mathbf{0}$. 
Note that we use $\indtw_\mathbf{0}(B)$ as shorthand for $\ind{\mathbf{0}\in B}.$ 
We use the Portmanteau Theorem and show that for all $\indtw_\mathbf{0}(\cdot)$-continuity sets $A$, $Q_n(A)\rightarrow \indtw_\mathbf{0}(A)$. Let $B_n=\{\omega\colon \norm{\phi_n(\omega,\cdot)-\phi(\omega,\cdot)}<n^{-s}\}$ and let $Q_n^\omega(\cdot)$ be the measure corresponding to $V_n$, conditional on $\omega$, viz.
$$Q_n^\omega(A)=\int_A\dfrac{e^{-\lambda_n\phi_n(\omega,v)}\pi(v)dv}{\int_{\mathbb{R}^d}  e^{-\lambda_n\phi_n(\omega,v)}\pi(v)dv}.$$
We can now write
\begin{align*}
    \limn Q_n(A)&= \limn\int_\Omega\int_A\dfrac{e^{-\lambda_n\phi_n(\omega,v)}\pi(v)dv}{\int_{\mathbb{R}^d}  e^{-\lambda_n\phi_n(\omega,v)}\pi(v)dv}dP= \limn\int_{B_n}Q_n^\omega(A)dP+\limn\int_{B^c_n}Q_n^\omega(A)dP.
\end{align*}
It is easy to see that 
$$ 0\leq \limn\int_{B^c_n}Q_n^\omega(A)dP\leq \limn\int_{B^c_n}dP=0,$$
where the last equality comes from assumption 2. 
Using this, we can write 
\begin{align*}
    \limn Q_n(A)&= \limn\int_{B_n}Q_n^\omega(A)dP+\limn\int_{B^c_n}Q_n^\omega(A)dP= \limn\int_{B_n} Q_n^\omega(A)dP=\int_{\Omega} \limn \ind{\omega\in B_n}Q_n^\omega(A)dP,
\end{align*}
where the last equality follows from dominated convergence theorem, noting that $Q_n^\omega(A)<1$. 
We now consider $\limn Q_n^\omega(A)$ for fixed $\omega\in B_n$. 
Note that for any $\indtw_\mathbf{0}(\cdot)$-continuity set, $\mathbf{0}$ is either an interior point or not in the set. 
Keeping this in mind, let $A_I$ be a $\indtw_\mathbf{0}(\cdot)$-continuity set such that $\mathbf{0}$ is interior in $A_I$. 
We can write
\begin{align*}
\limn Q_n^\omega(A_I)&=\limn\int_{A_I}\dfrac{e^{-\lambda_n\phi_n(\omega,v)}\pi(v)dv}{\int_{\mathbb{R}^d}  e^{-\lambda_n\phi_n(\omega,v)}\pi(v)dv}=\limn\int_{A_I}\dfrac{e^{-\lambda_n\phi_n(\omega,v)}\pi(v)dv}{\int_{A_I}  e^{-\lambda_n\phi_n(\omega,v)}\pi(v)dv+\int_{A_I^c}  e^{-\lambda_n\phi_n(\omega,v)}\pi(v)dv}. 
\end{align*}

Observe that for fixed $\omega\in B_n$, $\lambda_n(\phi_n(\omega,v)-\phi(\omega,v))=O(n^r)o(n^{-s})=o(n^{r-s})\coloneqq o(n^{-\beta})$, where $\beta>0$; we then have that $\lambda_n(\phi_n(\omega,v)-\phi(\omega,v))=o(n^{-\beta})$. 
Thus, assumption 1 and 2 imply that $\lambda_n(\phi_n(\omega,v)-\phi(\omega,v))\leq Cn^{-\beta}$ for some $\beta>0$ independent of $v$.  
Therefore, we can write
\begin{align*}
     \int_{A_I}  e^{-\lambda_n\phi_n(\omega,v)}\pi(v)dv&= \int_{A_I}  e^{-\lambda_n(\phi_n(\omega,v)-\phi(\omega,v)+\phi(\omega,v)-\phi(\omega,\mathbf{0}))}\pi(v)dv\\
    &= \int_{A_I}  e^{-\lambda_n(\phi_n(\omega,v)-\phi(\omega,v))}e^{-\lambda_n(\phi(\omega,v)-\phi(\omega,\mathbf{0}))}\pi(v)dv\\
    &\geq  \int_{A_I}  e^{-Cn^{-\beta}}e^{-\lambda_n(\phi(\omega,v)-\phi(\omega,\mathbf{0}))}\pi(v)dv.
\end{align*}
For large $n$ and fixed $\xi>0$, the neighborhood $\mathcal{N}_{n^{-\xi}}(\mathbf{0})=\{x\in \rdd\colon \norm{x}<n^{-\xi}\}$ is in $A_I$, since $\mathbf{0}$ is interior in $A_I$. 
From assumption 3 (H\"{o}lder continuity) we have that 
$$\sup_{v\in\mathcal{N}_{n^{-\xi}}(\mathbf{0})} |\phi(\omega,v)-\phi_n(\omega,\mathbf{0})|<C_2 d^{\xi\alpha}/n^{\xi\alpha}.$$
Choose $\xi$ such that $\alpha'=\xi\alpha>r$, and write
\begin{align*}
     \int_{A_I} e^{-\lambda_n(\phi(\omega,v)-\phi(\omega,\mathbf{0}))}e^{-Cn^{-\beta}}\pi(v)dv&\geq  \int_{\mathcal{N}_{n^{-\xi}}(\mathbf{0})}  e^{-\lambda_n(\phi(\omega,v)-\phi(\omega,\mathbf{0}))}e^{-Cn^{-\beta}}\pi(v)dv\\
   &\geq  C_3n^{-d\xi} e^{-C_2 n^{-\alpha'}\lambda_n}e^{-Cn^{-\beta}}\\
   &\geq   C_3n^{-d\xi} e^{-C_2 n^{-\alpha'+r}}e^{-Cn^{-\beta}}\\
   &=O(n^{-d\xi}).
\end{align*}
Note that assumption 5 implies that there exists some $N$, such that for all $n>N$, $\pi(v)$ is bounded below on $\mathcal{N}_{n^{-\xi}}(\mathbf{0})$ and so $\pi$ is absorbed into the constant $C_3$. 
Now, consider $A_{I}^c$, a $\indtw_\mathbf{0}(\cdot)$-continuity set such that $\mathbf{0}$ is not interior in $A_{I}^c$. 
There then exists a neighborhood around $\mathbf{0}$, call it $\mathcal{N}_k(\mathbf{0})$, such that $\mathcal{N}_k(\mathbf{0})\notin A_{I}^c$. 
By assumption 3 and 4, we also have that $\phi(\omega,v)>k'>0$ on $A_{I}^c$, for some $k'$ independent of $n$. 
It follows easily that
\begin{align*}
    \int_{A_{I}^c}  e^{-\lambda_n\phi_n(\omega,v)}\pi(v)dv&=  \int_{A_{I}^c}  e^{-\lambda_n\phi(\omega,v)}e^{-\lambda_n(\phi_n(\omega,v)-\phi(\omega,v))}\pi(v)dv\\
    &= O(1)\int_{A_{I}^c}  e^{-\lambda_n\phi(\omega,v)}\pi(v)dv\\
    &\leq  O(1)\int_{A_{I}^c}  e^{-\lambda_nk'}\pi(v)dv \\
    &= O(e^{-\lambda_nk'}),
\end{align*} 
where the second equality comes from the fact that $\lambda_n(\phi_n(\omega,v)-\phi(\omega,v))=o(1)$ uniformly in $v$. 
The last equality comes from the fact that $\pi(v)$ is a density with respect to the Lebesgue measure. 
It then follows that 
\begin{align*}
    \limn Q_n^\omega(A_{I})&=\limn\int_{A_{I}}\dfrac{e^{-\lambda_n\phi_n(\omega,v)}\pi(v)dv}{\int_{A_{I}}  e^{-\lambda_n\phi_n(\omega,v)}\pi(v)dv+\int_{A_{I}^c}  e^{-\lambda_n\phi_n(\omega,v)}\pi(v)dv}=\limn\frac{O(n^{-d\xi})}{O(n^{-d\xi})+O(e^{-\lambda_n k'}) }=1,
\end{align*}
which immediately gives that $\limn Q_n^\omega(A)=\indtw_\mathbf{0}(A)$ for $\indtw_\mathbf{0}(\cdot)$-continuity sets $A$. 
Then, since $\limn \ind{\omega\in B_n}=1$ $P$-almost surely by assumption 2, we have that
$$\limn Q_n(A)=\int_{\Omega} \limn \ind{\omega\in B_n}Q_n^\omega(A)dP=\indtw_\mathbf{0}(A),$$
which implies that 
$$Q_n \cond \indtw_\mathbf{0}(\cdot).$$
\end{proof}

\begin{lem}
Suppose the conditions of Lemma \ref{lem::exp_m} hold, except that $\limn \lambda_n=K$. 
Let $V_n$ be a sequence of random variables whose measure on $(\mathbb{R}^d,\mathscr{B}(\mathbb{R}^d))$ is given by
$$Q_n(A)= \int_\Omega\int_A\dfrac{e^{-\lambda_n\phi_n(\omega,v)}\pi(v)dv}{\int_{\mathbb{R}^d}  e^{-\lambda_n\phi_n(\omega,v)}\pi(v)dv}dP,$$
for $A\in \mathscr{B}(\mathbb{R}^d)$. 
Then $V_n\cond Q(A)$, where 
$$Q(A)= \int_A\dfrac{e^{-K\phi(v)}\pi(v)dv}{\int_{\mathbb{R}^d}  e^{-K\phi(v)}\pi(v)dv}.$$
\label{lem::exp_m2}
\end{lem}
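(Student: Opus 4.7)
The plan is to mirror the Portmanteau-based strategy used in the proof of Lemma \ref{lem::exp_m}, but with the different limiting measure $Q$ replacing the point mass $\indtw_{\mathbf{0}}$. By the Portmanteau theorem, it suffices to show $Q_n(A) \to Q(A)$ for every $Q$-continuity set $A$. Define $B_n = \{\omega : \|\phi_n(\omega, \cdot) - \phi(\omega, \cdot)\|_\infty < n^{-s}\}$ and let
$$Q_n^\omega(A) = \int_A \dfrac{e^{-\lambda_n \phi_n(\omega, v)}\pi(v)\,dv}{\int_{\mathbb{R}^d} e^{-\lambda_n \phi_n(\omega, v)}\pi(v)\,dv},$$
so that $Q_n(A) = \int_\Omega Q_n^\omega(A)\,dP$. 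Split this into an integral over $B_n$ and over $B_n^c$; the latter is bounded by $P(B_n^c) \to 0$, exactly as in the previous proof.

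Next, I analyze $Q_n^\omega(A)$ for fixed $\omega \in B_n$. Since $\lambda_n \to K < \infty$, the quantity $\lambda_n\,|\phi_n(\omega, v) - \phi(\omega, v)| \leq \lambda_n n^{-s}$ tends to $0$ uniformly in $v$, so $e^{-\lambda_n\phi_n(\omega, v)} \to e^{-K\phi(v)}$ pointwise in $v$ and is eventually dominated by $2\pi(v)^{-1}\cdot\pi(v) = 2\pi(v)$ -- more precisely, by $e^{C n^{-\beta}}\pi(v)$, which is integrable uniformly in $n$. Dominated convergence then gives
$$\int_A e^{-\lambda_n\phi_n(\omega,v)}\pi(v)\,dv \longrightarrow \int_A e^{-K\phi(v)}\pi(v)\,dv,$$
and the same with $A$ replaced by $\mathbb{R}^d$. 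To conclude $Q_n^\omega(A) \to Q(A)$ I need the limiting denominator to be strictly positive, which follows from assumptions 3, 4, and 5: near $\mathbf{0}$, $\phi$ is $\alpha$-Hölder continuous with $\phi(\mathbf{0}) = 0$ and $\pi$ is bounded below, so $e^{-K\phi(v)}\pi(v)$ is bounded away from zero on a neighborhood of $\mathbf{0}$ of positive Lebesgue measure.

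A final application of dominated convergence over $\Omega$ (with trivial dominator $\mathbf{1}_{B_n}Q_n^\omega(A) \leq 1$) then yields $\int_{B_n}Q_n^\omega(A)\,dP \to Q(A)$, completing the Portmanteau verification. The main contrast with Lemma \ref{lem::exp_m} is that we no longer have $\lambda_n \to \infty$ forcing concentration at the unique minimizer, so I cannot argue by exhibiting a shrinking neighborhood of $\mathbf{0}$ whose mass goes to $1$. Instead the work goes into showing genuine convergence of both numerator and denominator integrals to the corresponding integrals against $e^{-K\phi(v)}\pi(v)$, for which the uniform-in-$v$ smallness of $\lambda_n(\phi_n - \phi)$ on $B_n$ is the key ingredient; this is cheap because $\lambda_n$ is bounded. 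The only subtlety worth flagging is that the limiting $\phi$ in the statement of $Q$ is written free of $\omega$, which I interpret as a $P$-almost-sure selection of the random limit from Lemma \ref{lem::exp_m}; none of the argument above is affected by carrying $\omega$ through.
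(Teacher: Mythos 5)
Your proposal is correct and follows essentially the same route as the paper: restrict to the event $B_n$ where $\lVert\phi_n-\phi\rVert_\infty<n^{-s}$, use that $\lambda_n(\phi_n-\phi)\to 0$ uniformly in $v$ (cheap since $\lambda_n\to K$) so $e^{-\lambda_n\phi_n}\pi\to e^{-K\phi}\pi$, apply dominated convergence to numerator and denominator, and conclude via the decomposition and Portmanteau structure inherited from Lemma \ref{lem::exp_m}. You spell out a couple of points the paper leaves implicit (strict positivity of the limiting denominator and the $\omega$-dependence of $\phi$), which is fine and does not change the argument.
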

\begin{proof}
From the proof of Lemma \ref{lem::exp_m}, it suffices to look at 
$$\limn \int_A e^{-\lambda_n\phi_n(\omega,v)}\pi(v)dv, $$
on $B_n$. 
On $B_n$, $\norm{\phi_n(\omega,v)-\phi(\omega,v)}_\infty\rightarrow 0$, which implies that $a_n=e^{-\lambda_n(\phi_n(\omega,v)-\phi(\omega,v))}\rightarrow 1$, uniformly in $v$. 
We can then say that 
\begin{align*}
    \limn \int_A e^{-\lambda_n\phi_n(\omega,v)}\pi(v)dv&=\limn  \int_A  e^{-\lambda_n\phi(\omega,v)}a_n\pi(v)dv= \int_A  \limn e^{-\lambda_n\phi(\omega,v)} \pi(v)a_nd v =\int_A  e^{-K\phi(\omega,v)} \pi(v)dv,
\end{align*}
where the last line follows from dominated convergence theorem and the fact that $\pi$ is a density function. 
The result follows from the proof of Lemma \ref{lem::exp_m}.
\end{proof}
\begin{proof}[Proof of Theorem \ref{thm::exp_med}]
We can write this problem as an application of Lemmas \ref{lem::exp_m} and \ref{lem::exp_m2}. 
For the first part, set $\phi_n=\rho-\D(v;F_n)$. 
Then, we have that $\frac{1}{2C(\D)}n\epsilon_n=n^{r'+1}\frac{1}{2C(\D)}$ with $r'+1<s$. 
Assumption 1 of Lemma \ref{lem::exp_m} is then satisfied and the result follows. 
The second result is a direct application of Lemma \ref{lem::exp_m2}. 
\end{proof}
\begin{proof}[Proof of Remark \ref{rem::continuity}]
Note that for halfspace depth consider two points $x,y\in \rdd$. If $F_u$ are $\alpha$-H\"{o}lder continuous, then
\begin{align}
\label{eqn::fuc}
    |F_u(x^\top u)-F_u(y^\top u)|&\leq C| (x-y)^\top u|^\alpha = C\norm{x-y}^\alpha \left| \left(\frac{x-y}{\norm{x-y}}\right)^\top u\right|^\alpha\leq C\norm{x-y}^\alpha.
\end{align}
Now, without loss of generality, suppose that $\HD(x,F)>\HD(y,F)$. 
Suppose further that $u^*$ is such that $F_{u^*}(y^\top u)=\inf_u F_u(y^\top u)$. 
There exists such a $u^*$ because $F$ is continuous, implying that $F_u$ is continuous in $u$, thus, $F_u$ is continuous function on a compact set. 
It follows that 
\begin{align*}
|\HD(x,F)-\HD(y,F)|=\inf_u F_u(x^\top u)-F_{u^*}(y^\top u)\leq  F_{u^*}(x^\top u)-F_{u^*}(y^\top u)\leq C\norm{x-y}^\alpha.
\end{align*}
For $\IRW$ depth, it holds that
\begin{align*}
|\IRW(x,F)-\IRW(y,F)|&=\int_{S^{d-1}} \min(F_u(x^\top u),1-F_u(x^\top u))-\min(F_u(y^\top u),1-F_u(y^\top u))d\nu(u)\\
&\leq \int_{S^{d-1}} 2|F_u(x^\top u)-F_u(y^\top u)|+2|1-F_u(x^\top u)-F_u(y^\top u)|d\nu(u)\\
&\leq  4C\norm{x-y}^\alpha \int_{S^{d-1}}2d\nu(u),
\end{align*}
which is a result of \eqref{eqn::fuc} and the fact that $|1-F_u(x^\top u)-F_u(y^\top u)|\leq 1$. 

For simplicial depth, if $F$ is $\alpha$-H\"{o}lder continuous, then we must show that $\Pr(x\in \Delta(X_1,\ldots,X_{d+1})$ is also $\alpha$-H\"{o}lder continuous. 
It is easy to begin with two dimensions. 
Consider $\Pr(x\in \Delta(X_1,X_2,X_{3}))-\Pr(y\in \Delta(X_1,X_2,X_{3}))$, as per \citep{liu1990}, we need to show that $\Pr(\overline{X_1X_2}\text{ intersects }\overline{xy})\leq C\norm{x-y}^\alpha$. 
In order for this event to occur, we must have that $X_1$ is above $\overline{xy}$ and $X_2$ is below $\overline{xy}$, but both are projected onto the line segment $\overline{xy}$ when projected onto the line running through $\overline{xy}$. 
The affine invariance of simplicial depth implies we can assume, without loss of generality, that $x$ and $y$ lie on the axis of the first coordinate. 
Let $x_1$ and $y_1$ be the first coordinates of $x$ and $y$. Suppose that $X_{11}$ is the first coordinate of $X_1$. 
It then follows from $\alpha$-H\"{o}lder continuity of $F$ that 
$$\Pr(\overline{X_1X_2}\text{ intersects }\overline{xy})\leq \Pr(x_1<X_{11}<y_1)\leq C|x_1-y_1|^{\alpha}\leq C\norm{x-y}^\alpha.$$
In dimensions greater than two, a similar line of reasoning can be used. 
We can again assume, without loss of generality, that $x$ and $y$ lie on the axis of the first coordinate. 
It holds that $$\Pr(x\in \Delta(X_1,X_2,X_{3}))-\Pr(y\in \Delta(X_1,X_2,X_{3}))\leq \binom{d+1}{d}\Pr(A_d),$$ 
where $A_d$ is the event that the $d-1$-dimensional face of the random simplex, formed by $d$ points randomly drawn from $F$, intersects the line segment $\overline{xy}$. 
It is easy to see that 
\begin{align*}
    \Pr(A_d)\leq \Pr(x_1<X_{11}<y_1)\leq C|x_1-y_1|^{\alpha}\leq C\norm{x-y}^\alpha. \tag*{\qedhere}
\end{align*}
\end{proof}
\begin{proof}[Proof of Differential Privacy of Mechanism \ref{mech::fun_ptr}]
The proof has the same outline as that of \citep{Brunel2020}, as well as the proof that the exponential mechanism is differentially private, which can be found in \citep{McSherry2007, Dwork2014}. 
First, assume that it holds $|\phi_{\sam{X}{n}}(x)-\phi_{\sam{Y}{n}}(x)|\leq \eta$ $\forall x$, then
\begin{align*}
f_{\sam{X}{n}}(v)/f_{\sam{Y}{n}}(v)&=\frac{ \exp(-{\phi_{\sam{X}{n}}(v)\frac{\epsilon/2}{ \eta}})}{ \exp(-{\phi_{\sam{Y}{n}}(v)\frac{\epsilon/2}{ \eta}})}\frac{ \int \exp(-{\phi_{\sam{Y}{n}}(v)\frac{\epsilon/2}{ \eta}})dv}{ \int\exp(-{\phi_{\sam{X}{n}}(v)\frac{\epsilon/2}{ \eta}})dv}\\
&\leq e^{\epsilon/2}\frac{ \int \exp(-{\phi_{\sam{Y}{n}}(v)\frac{\epsilon/2}{ \eta}})dv}{ \int\exp(-{\phi_{\sam{X}{n}}(v)\frac{\epsilon/2}{ \eta}})dv}\\
&\leq  e^{\epsilon/2} e^{\epsilon/2}\frac{ \int \exp(-{\phi_{\sam{X}{n}}(v)\frac{\epsilon/2}{ \eta}})dv}{ \int\exp(-{\phi_{\sam{X}{n}}(v)\frac{\epsilon/2}{ \eta}})dv}\\
&=e^{\epsilon}.
\end{align*}
Note that, for $B\in \mathscr{B}(\rdd)$ (the Borel sets with respect to $\rdd$) this implies that 
\begin{equation}
    \Pr(\widehat{T}(\sam{X}{n})\in B)\leq e^{\epsilon}\Pr(\widehat{T}(\sam{Y}{n})\in B).
    \label{eqn::dpexp}
\end{equation}
It follows from \cite{Brunel2020} that $A_\eta(\phi_{\sam{X}{n}};\sam{X}{n})$ has global sensitivity equal to 1, since changing one point can at most change the breakdown by 1. 
Then
\begin{align*}
    \Pr\left( \widetilde{T}(\sam{X}{n})\in B\right)&=\Pr\left(A_\eta(\phi_{\sam{X}{n}};\sam{X}{n})+\frac{1}{\epsilon} V\geq 1+\frac{\log(2/\delta)}{\epsilon},\widehat{T}(\sam{X}{n})\in B\right)\\
    &\leq e^{\epsilon}\Pr\left(A_\eta(\phi_{\sam{Y}{n}};\sam{Y}{n})+\frac{1}{\epsilon} V\geq 1+\frac{\log(2/\delta)}{\epsilon}\right)\Pr(\widehat{T}(\sam{X}{n})\in B)\\
    &\leq e^{2\epsilon}\Pr\left(A_\eta(\phi_{\sam{Y}{n}};\sam{Y}{n})+\frac{1}{\epsilon} V\geq 1+\frac{\log(2/\delta)}{\epsilon}\right) \Pr(\widehat{T}(\sam{Y}{n})\in B)\\
    &=e^{2\epsilon}\Pr\left( \widetilde{T}(\sam{Y}{n})\in B\right).
\end{align*}
The first inequality is from independence and the fact that $A_\eta(\phi_{\sam{X}{n}};\sam{X}{n})+\frac{1}{\epsilon} V$ is an $\epsilon$-differentially private estimator. 
The second inequality is from \eqref{eqn::dpexp}.
Now what if there exists an $x$ such that $|\phi_{\sam{X}{n}}(x)-\phi_{\sam{Y}{n}}(x)|\geq \eta$ ? This implies that $A_\eta(\phi_{\sam{X}{n}};\sam{X}{n})=1$ and
\begin{align*}
    \Pr\left( \widetilde{T}(\sam{X}{n})\in B\right)&\leq \Pr\left(A_\eta(\phi_{\sam{X}{n}};\sam{X}{n})+\frac{1}{\epsilon} V\geq 1+\frac{\log(2/\delta)}{\epsilon}\right)=\Pr\left( V\geq \log(2/\delta)\right) =\delta  \leq \delta +e^{2\epsilon}\Pr\left( \widetilde{T}(\sam{Y}{n})\in B\right).
\end{align*}
This implies that we get $(2\epsilon,\delta)$ differential privacy if $B$ is restricted to $\mathscr{B}(\rdd)$. 
For completeness, we need to include sets of the form $B=B'\cup\{\perp\}$, where $B'\in \mathscr{B}(\rdd)$. 
Consider 
\begin{align*}
\Pr\left( \widetilde{T}(\sam{X}{n})\in B\right)&=\Pr\left( \widehat{T}(\sam{X}{n})\in B',A_\eta(\phi_{\sam{X}{n}};\sam{X}{n})+\frac{1}{\epsilon} V\leq 1+\frac{\log(2/\delta)}{\epsilon}\right)+ \Pr\left(A_\eta(\phi_{\sam{X}{n}};\sam{X}{n})+\frac{1}{\epsilon} V> 1+\frac{\log(2/\delta)}{\epsilon}\right) \\
&\leq e^{2\epsilon}\left(\Pr\left( \widetilde{T}(\sam{Y}{n})\in B'\right)+ \Pr\left(A_\eta(\phi_{\sam{Y}{n}};\sam{Y}{n})+\frac{1}{\epsilon} V> 1+\frac{\log(2/\delta)}{\epsilon}\right)\right)+\delta\\
&=e^{2\epsilon}\Pr\left( \widetilde{T}(\sam{Y}{n})\in B\right)+\delta .
\end{align*}
The first inequality comes from the fact that we get $(2\epsilon,\delta)$ differential privacy if $B$ is restricted to $\mathscr{B}(\rdd)$ and the fact that $A_\eta(\phi_{\sam{Y}{n}};\sam{Y}{n})+\frac{1}{\epsilon} V$ is $\epsilon$-differentially private. 

Now, suppose that $V,\ a_\delta$ and $b_\delta$ correspond to the Gaussian version of PTR. Then, following the same steps as for the Laplace version gives, for $B\in \mathscr{B}(\rdd)$,
\begin{align*}
\Pr\left( \widetilde{T}(\sam{X}{n})\in B\right)\leq e^{2\epsilon}\Pr\left( \widetilde{T}(\sam{Y}{n})\in B\right)+\delta,
\end{align*}
when $\norm{\phi_{\sam{X}{n}}-\phi_{\sam{Y}{n}}}_{\infty}<\eta$. 
When $\norm{\phi_{\sam{X}{n}}-\phi_{\sam{Y}{n}}}_{\infty}\geq \eta$,
\begin{align*}
    \Pr\left( \widetilde{T}(\sam{X}{n})\in B\right)&\leq \Pr\left( Z\geq \sqrt{2\log(1.25/\delta)}\right) \leq \delta .
\end{align*}
We then have that 
$$\Pr\left(\widetilde{T}(\sam{X}{n})\in B\right)\leq e^{2\epsilon}\Pr\left(\widetilde{T}(\sam{Y}{n})\in B\right)+\delta.$$
Again, we need to include sets of the form $B=B'\cup\{\perp\}$, where $B'\in \mathscr{B}(\rdd)$. 
Consider 
\begin{align*}
\Pr\left( \widetilde{T}(\sam{X}{n})\in B\right)&=\Pr\left( \widetilde{T}({\sam{X}{n}})\in B'\right)+ \Pr\left(A_\eta(\phi_{\sam{X}{n}};\sam{X}{n})+\frac{\sqrt{2\log(1.25/\delta)}}{\epsilon} Z> 1+\frac{2\log(1.25/\delta)}{\epsilon}\right) \\
&\leq e^{2\epsilon}\left(\Pr\left( \widetilde{T}(\sam{Y}{n})\in B'\right)+ \Pr\left(A_\eta(\phi_{\sam{Y}{n}};\sam{Y}{n})+\frac{\sqrt{2\log(1.25/\delta)}}{\epsilon} Z> 1+\frac{2\log(1.25/\delta)}{\epsilon}\right) \right)+2\delta\\
&=e^{2\epsilon}\Pr\left( \widetilde{T}(\sam{Y}{n})\in B\right)+2\delta .
\end{align*}
\end{proof}

\begin{proof}[Proof of Theorem \ref{thm::PTR_EM}]
The only difference, from the previous proofs of results in this section is that we do not have a prior $\pi.$ 
The assumed existence of the measures $Q_{\sam{X}{n}}$ and $$Q(A)=\int_A\frac{\exp(-K\frac{\phi(v)}{2})}{\int_{\rdd}\exp(-K\frac{\phi(v)}{2})dv}dv$$
remedy this. 
The proofs of Lemma \ref{lem::exp_m} and Lemma \ref{lem::exp_m2} would then imply that $\widetilde{T}({\sam{X}{n}})$ satisfies the above convergence results if $P(\widetilde{T}({\sam{X}{n}})=\perp)\rightarrow 0$ as $n\rightarrow\infty$. 
This statement is, however, assumed by the theorem. 
\end{proof}
\begin{proof}[Proof of Theorem \ref{thm::c_PDM}]
First, note that because $\sup_{u}|\med(F_u)|<\infty$, $\sup_{u}\iqr(F_u)<\infty$ and $\inf_{u}\iqr(F_u)>0$, we have that 
\begin{equation*}
    O_2\left(x, F\right) \geq \frac{\norm{x}-\sup _{\|u\|=1} \mu\left(F_u\right)}{\sup _{\|u\|=1} \sigma\left(F_u\right)},
\end{equation*}
see page 1477 of \citep{zuo2003}. 
It immediately follows that
\begin{align*}
\int_{\mathbb{R}^d}\exp\left(-\frac{\epsilon_n  O_2(v;F)}{2\eta_n }\right)dv &\leq C_1\int_{\mathbb{R}^d}\exp\left(-\frac{\epsilon_n  \norm{v}}{2\eta_n }\right)dv<\infty.
\end{align*}
For the density 
\begin{align*}
\int_{\mathbb{R}^d}\exp\left(-\frac{\epsilon_n  \mathcal{O}_2(v,\sam{X}{n};M_n)}{2\eta_n }\right)dv &= \int_{\norm{v}<M_n}\exp\left(-\frac{\epsilon_n  \mathcal{O}_2(v,\sam{X}{n};M_n)}{2\eta_n }\right)dv<\infty.
\end{align*}
We then have both
$$\frac{\exp\left(-\frac{\epsilon_n  \mathcal{O}_2(v,\sam{X}{n};M_n)}{2\eta_n }\right)}{\int_{\mathbb{R}^d}\exp\left(-\frac{\epsilon_n  \mathcal{O}_2(v,\sam{X}{n};M_n)}{2\eta_n }\right)dv} \qquad\text{and}\qquad \frac{\exp\left(-\frac{\epsilon_n  O_2(v;F)}{2\eta_n }\right)}{\int_{\mathbb{R}^d}\exp\left(-\frac{\epsilon_n  O_2(v;F)}{2\eta_n }\right)dv}$$
are valid Lebesgue density functions. 
We need to show that $\mathcal{O}_2(v,\sam{X}{n};M_n)$ satisfies assumptions 2-4 of Lemma \ref{lem::exp_m}. 
Assumptions 2 and 3 hold on account of $\sup_{u}|\med(F_u)|<\infty$, $\sup_{u}\iqr(F_u)<\infty$ and $\inf_{u}\iqr(F_u)>0$, \citep[see Remark 2.5 of][]{zuo2003}. 
Assumption 4 is implied by the theorem assumptions. 
We need to show that 
$$\Pr\left(A_{\eta_n}(\phi_{\sam{X}{n}};\sam{X}{n})+V \frac{a_{\delta_n}}{\epsilon_n}\leq \frac{b_{\delta_n}}{\epsilon_n}+1\right)\rightarrow 0.$$ 
First, suppose that $\norm{x}<M_n$. 
From proof of Theorem \ref{thm::pd_con}, we have directly that
$$\limn\Pr\left(A_{\eta_n}(\phi_{\sam{X}{n}};\sam{X}{n})+V\frac{a_{\delta_n}}{\epsilon_n}\leq 1+\frac{\log(2/\delta_n)}{\epsilon_n}\right)=0.$$
The fact that $\norm{x}<M_n$ allows us to bounded the remainder in the Taylor series expansion used in the proof of Theorem \ref{thm::pd_con}. 
Now, suppose that $\norm{x}>M_n$, which implies that $O^u_\ell(x;\sam{X}{n})-O^u_\ell(x;\sam{Y}{n})=0$, which implies that 
$$\Pr\left(A_{\eta_n}(\phi_{\sam{X}{n}};\sam{X}{n})+V\frac{a_{\delta_n}}{\epsilon_n}\leq 1+\frac{\log(2/\delta_n)}{\epsilon_n}\right)=0.$$
The conditions of Theorem \ref{thm::PTR_EM} are satisfied and the result follows. 
\end{proof}

\end{document}